\newtheorem{Definition}{Definition}[section]
\newtheorem{Theorem}[Definition]{Theorem}
\newtheorem{Lemma}[Definition]{Lemma}
\newtheorem{Remark}[Definition]{Remark}
\newtheorem*{theorem*}{Theorem}
\newtheorem{Question}[Definition]{Question}
\newtheorem{thm}{Theorem}
\definecolor{darkred}{rgb}{1, 0.1, 0.3}
\definecolor{darkblue}{rgb}{0.1, 0.1, 1}
\definecolor{darkgreen}{rgb}{0,0.6,0.5}
\newcommand {\mm}[1] {\ifmmode{#1}\else{\mbox{\(#1\)}}\fi}
\newcommand{\C}{\mathbb{C}} 
\newcommand{\cp}{\mathbb{C}\mathrm{P}} 
\newcommand{\rp}{\mathbb{R}\mathrm{P}} 
\newcommand{\R}{\mathbb{R}} 
\newcommand{\Z}{\mathbb{Z}} 
\newcommand{\HH}{\mathcal{H}}
\newcommand{\MM}{\mathcal{M}}
\newcommand{\VV}{\mathcal{V}}
\newcommand{\dd}{\mathrm{d}}
\newcommand{\diff}{\frac{\mathrm{d}}{\mathrm{d}t}\Big|_{t=0}}
\begin{document}

\title{Hofer--Zehnder capacity of disc tangent bundles of projective spaces}
 
\author{{Johanna Bimmermann}}

\maketitle

\begin{abstract}
\noindent
We compute the Hofer--Zehnder capacity of disc tangent bundles of the complex and real projective spaces of any dimension. The disc bundle is taken with respect to the Fubini-Study resp.\ round metric, but we can obtain explicit bounds for any other metric. In the case of the complex projective space we also compute the Hofer--Zehnder capacity for the magnetically twisted case, where the twist is proportional to the Fubini-Study form. For arbitrary twists we can still give explicit upper bounds.
\end{abstract}

\section{Introduction}
\noindent
The Hofer--Zehnder capacity \cite[Chpt.\ 3]{HZ94} belongs to the class of symplectic capacities, i.e.\ numerical symplectic invariants measuring the symplectic size of a symplectic manifold. For a compact symplectic manifold $(M,\omega)$ possibly with boundary $\partial M$ the Hofer--Zehnder capacity is defined as follows
$$
c_{HZ}(M,\omega):=\sup\left\lbrace \mathrm{max}(H)\ \vert\ H: M\to \R\ \  \text{smooth, admissible}\right\rbrace,
$$
where admissible means:
\begin{itemize}
    \item $0\leq H$ and there exists an open set $U\subset M\setminus\partial M$ such that $H\vert_U\equiv 0$,
    \item there exists a compact set $K\subset M\setminus\partial M$ such that $H\vert_{M\setminus K}\equiv\max(H)$,
    \item all non-constant periodic solutions $\gamma:\R \to M$ of $\dot \gamma=X_H$ have period $T>1$.
\end{itemize}
 Here, $X_H$ denotes the Hamiltonian vector field defined imposing the relation $\dd H=-\iota_{X_H}\omega$. As all symplectic capacities, also the Hofer--Zehnder capacity is fairly hard to compute and unknown in many cases. A classic class of examples for symplectic manifolds are (co-) tangent bundles. Recall that the cotangent bundle of any smooth manifold $N$ admits a canonical symplectic structure $\dd\lambda$, where $\lambda\in \Omega^1(T^*N,\R)$ is pointwise defined as follows
$$
\lambda_{(x,p)}: T_{(x,p)}TN\to \R;\ \xi \mapsto p(\dd\pi _{(x,p)}(\xi))\ \ \ \forall\ (x,p)\in T^*N,
$$
where $\pi: TN\to N$ denotes the footpoint projection.
Fixing a Riemannian metric $g$ on $N$ we can pull back $\lambda$ to $TN$ via the metric isomorphism. By abuse of notation this pullback will also be denoted by $\lambda$. We will now study the disc subbundles with respect to the already chosen metric $g$,
$$
D_\rho N:=\lbrace (x,v)\in TN\ \vert \ g_x(v,v)< \rho^2\rbrace
$$
and investigate the following question.
\begin{Question}\label{q1}
What is the value of $c_{HZ}(D_\rho N,\dd\lambda)$?
\end{Question}
\noindent
To the authors knowledge this question is still wildly open and most of the results in this direction concern finiteness. For example the Hofer--Zehnder capacity is finite, if the base manifold carries a circle action with non-contractible orbits \cite{Irie14}, if the Hurewicz map $\pi_2(N)\to H_2(N,\Z)$ is non zero \cite{AFO17} or if $N$ is rationally inessential \cite{FP17}. All these results rely on the observation by Irie \cite[Cor. 3.5]{Irie14} that vanishing of symplectic homology implies finiteness of the Hofer--Zehnder capacity. Finiteness can also be shown by finding a Lagrangian embedding of $N$ into a manifold with finite Hofer--Zehnder capacity, as the Lagrangian neighborhood theorem assures that then also a small disc bundle $(D_\varepsilon N,\dd\lambda)\hookrightarrow (M,\omega)$ embeds symplectically. This is also the easiest way to see that for the two examples we will treat $c_{HZ}(D_1\cp^n,\dd\lambda)$ and $c_{HZ}(D_1\rp^n,\dd\lambda)$ are finite. Indeed we have Lagrangian embeddings $\cp^n\hookrightarrow (\cp^n\times\cp^n,\omega_{FS}\ominus\omega_{FS})$ and $\rp^n\hookrightarrow (\cp^n,\omega_{FS})$.\\ 
\ \\
However explicit values of the Hofer--Zehnder capacity of disc tangent bundles are only given for certain flat tori \cite[Ch.4.4, Prop. 4]{HZ94} and some convex subsets of $\R^n$ \cite{AKO14}, using that in these cases the tangent bundles can be trivialized and then identified with subsets of $\R^{2n}$.\\
\ \\
There is a variation of the Hofer--Zehnder capacity for which the analog of Question \ref{q1} is partially answered. For a general compact symplectic manifold $(M,\omega)$ possibly with boundary $\partial M$ consider a closed submanifold $\Sigma\subset M$ not intersecting the boundary, i.e. $\Sigma\subset M\setminus \partial M$. Then the Hofer--Zehnder capacity relative to $\Sigma$ is defined as
$$
c_{HZ}(M,\Sigma,\omega):=\sup\left\lbrace \mathrm{max}(H)\ \vert\ H: M\to \R\ \  \text{smooth, admissible and vanishing on }\Sigma\right\rbrace.
$$
For any homotopy class $\nu\in \pi_1(M)$ we say $H: M\to \R$ is $\nu$-admissible if $H$ is smooth, $0\leq H$, vanishes on an open set $U\subset M\setminus\partial M$, constantly attains its maximum on the complement of a compact set $K\subset M\setminus\partial M$ and all non-constant periodic solutions $\gamma:\R \to M$ of $\dot \gamma=X_H$ with $[\gamma]=\nu$ have period $T>1$. One defines the $\pi_1$-sensitive Hofer--Zehnder capacity $c^\nu_{HZ}(M,\Sigma,\omega)$, replacing admissible with $\nu$-admissible. In general we have the following chain of inequalities
$$
c_{HZ}(M,\Sigma,\omega)\leq c_{HZ}^\nu(M,\Sigma,\omega)\leq c_{HZ}^\nu(M,\omega)
$$
and also
$$
c_{HZ}(M,\Sigma,\omega)\leq c_{HZ}(M,\omega)\leq c_{HZ}^\nu(M,\omega),
$$
simply because the class of admissible Hamiltonians is less and less constrained.
An important result by Weber \cite{Wbr06} shows that for closed $N$ and a non-zero class $\nu\in\pi_1(N)$, the Hofer-Zehnder capacity relative to the zero section is given by the length $l_\nu$ of the shortest closed geodesic in the class $\nu\in\pi_1(M)$, i.e.
\begin{equation}
    c_{HZ}^\nu(D_1N,N,\dd\lambda)=l_\nu.
\end{equation}
For non-aspherical, homogeneous spaces and positive curvature metrics on the 2-sphere the result was extended to the class of contractible loops ($\nu=0$) by Benedetti and Kang \cite[Cor.\ 2.8]{BK22}. Observe that restricting to the case $\rho=1$ is fine as scaling the fibers yields a symplectomorphism identifying $(D_\rho N,\dd\lambda)\cong (D_1 N,\rho\cdot \dd\lambda)$.\\
\ \\
In this article we derive the precise value of the Hofer--Zehnder capacity of $(D_1\cp^n,\dd\lambda)$ and $(D_1\rp^n,\dd\lambda)$ with respect to the Fubini-Study metric respectively the round metric. 
\begin{thm}\label{thma}
    Equip $\cp^n$ with the Fubini-Study metric and $\rp^n$ with the round metric. Denote by $l$ the length of the prime geodesics, then
    $$
    c_{HZ}(D_1 \cp^n, \dd\lambda)=l,
    $$
    while
    $$
    c_{HZ}(D_1 \rp^n, \dd\lambda)=2l.
    $$
\end{thm}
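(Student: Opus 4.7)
The plan is to establish matching lower and upper bounds for each of the two capacities.

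For the lower bound on $c_{HZ}(D_1\cp^n,\dd\lambda)$, use autonomous radial Hamiltonians $H(x,v)=h(g(v,v)/2)$, with $h:[0,1/2]\to[0,\infty)$ smooth, vanishing near $0$, constant near $1/2$, and monotone in between. Since the Fubini--Study geodesic flow on the unit tangent bundle is periodic with prime period $l$, the Hamiltonian flow of $H$ on the level $\{|v|=\rho\}$ is a time-reparametrization of the geodesic flow whose prime periodic orbit has period $l/(\rho\,h'(\rho^2/2))$. Requiring this to exceed $1$ forces the pointwise inequality $h'(u)<l/\sqrt{2u}$; integrating yields $\max h<l$, with $l$ attained as supremum, and hence $c_{HZ}(D_1\cp^n,\dd\lambda)\ge l$.

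For $c_{HZ}(D_1\rp^n,\dd\lambda)\ge 2l$ the same radial construction only produces the weaker bound $l$, because the prime (non-contractible) closed geodesic of $\rp^n$ has length $l$ and already lifts to a closed orbit of the Hamiltonian flow on $T\rp^n$: under the identification $T\rp^n=TS^n/\mathbb{Z}_2$, a half-great-circle on $S^n$ becomes a closed orbit of prime period $l/\rho$. The plan to overcome this is to perturb the radial Hamiltonian by a $\mathbb{Z}_2$-equivariant, $\mathrm{SO}(n+1)$-symmetry-breaking term on $TS^n$, designed so that generically the only surviving periodic orbits project to full (and hence contractible) closed geodesics on $\rp^n$ of length $2l$. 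After this, the admissibility constraint becomes $h'(u)<2l/\sqrt{2u}$ and the integrated bound becomes $\max H<2l$, giving $c_{HZ}(D_1\rp^n,\dd\lambda)\ge 2l$.

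For the upper bounds, use symplectic embeddings into ambient manifolds of computable Hofer--Zehnder capacity. For $\cp^n$, exploit the Lagrangian embedding of the diagonal $\cp^n\hookrightarrow(\cp^n\times\cp^n,\omega_{FS}\ominus\omega_{FS})$: the identification $(\cp^n,-\omega_{FS})\cong(\cp^n,\omega_{FS})$ via complex conjugation together with the known computation for the ordinary product yields $c_{HZ}(\cp^n\times\cp^n,\omega_{FS}\ominus\omega_{FS})=l$, and identifying a Weinstein neighborhood of the diagonal with $D_1\cp^n$ gives $c_{HZ}(D_1\cp^n,\dd\lambda)\le l$. For $\rp^n$, use the Lagrangian embedding $\rp^n\hookrightarrow(\cp^n,\omega_{FS})$ as real points, which gives $D_{1/2}\rp^n\hookrightarrow\cp^n$: the maximal Weinstein radius is half that of the complex case because the holomorphic discs bounded by $\rp^n$ are half-$\cp^1$'s. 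Combined with $c_{HZ}(\cp^n,\omega_{FS})=l$ and the scaling $c_{HZ}(D_\rho N,\dd\lambda)=\rho\cdot c_{HZ}(D_1 N,\dd\lambda)$, this yields $c_{HZ}(D_1\rp^n,\dd\lambda)\le 2l$.

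The main obstacle is clearly the lower bound for $\rp^n$: constructing the symmetry-breaking perturbation precisely, and verifying that all short non-contractible periodic orbits are destroyed while the Hamiltonian remains admissible. This is exactly where the non-simply-connectedness of $\rp^n$ --- and the fact that its prime closed geodesic is homotopically non-trivial --- enters in an essential way, and where the factor of two in the formula traces back to the two-fold cover $S^n\to\rp^n$. A secondary subtlety is pinning down the correct maximal Weinstein radius in each of the two Lagrangian embeddings, since it is precisely the factor-two ratio of these radii that accounts for the factor-two ratio of the two capacities.
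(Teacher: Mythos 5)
Your overall architecture --- explicit Hamiltonians for the lower bounds, embeddings into closed manifolds controlled by holomorphic spheres for the upper bounds --- is the paper's, and your radial computation giving $c_{HZ}(D_1\cp^n,\dd\lambda)\ge l$ is essentially identical to the paper's. The decisive gap is the lower bound $2l$ for $\rp^n$. A symmetry-breaking perturbation of the radial Hamiltonian cannot work. First, by Weber's theorem quoted in the introduction, $c_{HZ}(D_1\rp^n,\rp^n,\dd\lambda)\le c_{HZ}^{\nu}(D_1\rp^n,\rp^n,\dd\lambda)=l_\nu=l$ for the nontrivial class $\nu$, so \emph{every} admissible Hamiltonian vanishing on the zero section has oscillation at most $l$; a perturbation of a radial Hamiltonian supported away from $\{v=0\}$ still vanishes near the zero section, so no genericity argument can push it past $l$. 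Second, even ignoring this, each level $\{|v|=\rho\}$ carries a closed manifold of short non-contractible orbits, and small perturbations leave periodic orbits of nearby period, so the admissibility constraint does not relax to $h'(u)<2l/\sqrt{2u}$. The paper's mechanism is different in kind: it takes $H_\varepsilon(x,v)=\sqrt{|v|^2+V_\varepsilon(x)}$ with a potential wall of height $1$ concentrated near the equator of a hemisphere, so that the region $\{H_\varepsilon<1\}$ meets the zero section only over the open hemisphere, and every orbit of energy $<1$ is a geodesic billiard trajectory confined to that hemisphere; conservation of angular momentum and the spherical law of sines then show all such orbits have period at least $2\pi-O(\sqrt{\varepsilon})=2l-O(\sqrt{\varepsilon})$. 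This is an explicit estimate, not a transversality argument.

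For the upper bounds your route is the paper's in spirit, but the two inputs you need --- the exact maximal Weinstein radius and the ambient capacity --- are asserted rather than derived, and both are off by a factor of two in the paper's normalization (where the line class has area $4\pi=2l$, hence $c_{HZ}(\cp^n,\omega_{FS})=2l$ and $c_{HZ}(\cp^n\times\cp^n,\omega_{FS}\ominus\omega_{FS})\le 2l$, while the complement of $\bar\Delta$ in $(\cp^n\times\cp^n,\omega_{FS}\ominus\omega_{FS})$ is $D_2\cp^n$ and the complement of the quadric $Q^{n-1}$ in $(\cp^n,\omega_{FS})$ is $D_1\rp^n$, not $D_{1/2}\rp^n$); your two errors happen to cancel, which is why your final bounds come out right. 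More importantly, the sharp radius is not a ``secondary subtlety'': the generic Weinstein theorem only yields some small $\varepsilon$-disc bundle and hence only $c_{HZ}(D_1N,\dd\lambda)\le C/\varepsilon$, which is vacuous. Identifying the maximal radius is exactly the content of Theorems \ref{thmc} and \ref{thmd}, proved by constructing explicit $\mathrm{SU}(n+1)$- resp.\ $\mathrm{SO}(n+1)$-equivariant symplectomorphisms via moment maps; that construction, together with the Hofer--Viterbo cobordism argument for the moduli space of lines through a point meeting $\bar\Delta$, is where the real work --- and the factor of two --- actually lives.
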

\noindent
Observe that in the case of $\cp^n$ it coincides with the relative capacity, while in the case of $\rp^n$ there is a factor of two. This factor of two is an example of the phenomenon of Lagrangian barriers first described by Biran \cite{BR01} and recently demonstrated impressively by Brendel and Schlenk \cite{BS22}.

\begin{Remark}
    We already made two choices here. First the metric and second the neighborhood of the zero section. Both are in some sense related. Any two metrics $g$ and $h$ are related by a symmetric positive definite bundle map $A: TN\to TN$ such that for all $(x,v)\in TN$ we have
    $$
    h_x(v,v)= g_x(A_x v,v).
    $$
    Further denote $\lambda_g$ resp.\ $\lambda_h$ the pullback of the canonical 1-form with respect to $g$ resp.\ $h$. Then these two 1-forms can also be identified via the bundle map $A$, i.e. $A^*\lambda_g=\lambda_h$. In particular the  unit disc subbundle $D_1N$ is identified with some ellipsoid subbundle 
    $$
    E_AN:=\lbrace (x,v)\in TN\ \vert \ g_x(v, A_x^{-1}v)\leq 1\rbrace.
    $$
    Therefore $E_AN\subset D_aN$ where $a:N\to\R_{\geq 0}$ is the maximal eigenvalue of $A_x^{-1}$. In particular if $N$ is compact we can obtain the following upper bound for the Hofer--Zehnder capacity
    $$
    c_{HZ}(D_1N,\dd\lambda_h)\leq\max_{x\in N} a(x)\cdot c_{HZ}(D_1N,\dd\lambda_g).
    $$
\end{Remark}
\noindent
We will also look at the magnetically twisted tangent bundle $(D_\rho \cp^n,\dd\lambda-s\pi^*\omega_{FS})$ for arbitrary constants $\rho>0, s\in \R$. We again have a scaling property, in this case
$$
(D_\rho \cp^n,\dd\lambda-s\pi^*\omega_{FS})\cong \left(D_1 \cp^n,\rho\left (\dd\lambda-\frac{s}{\rho}\pi^*\omega_{FS}\right)\right).
$$
It is therefore enough to restrict to the case $\rho=1$ and $s\in\R$ arbitrary.
\begin{thm}\label{thmb} Denote $\omega_{FS}$ the Fubini-Study form and $l$ the length of the prime geodesics, then
    $$
     c_{HZ}(D_1 \cp^n,\dd\lambda-s\pi^*\omega_{FS})=l\left(\sqrt{s^2+ 1}-\vert s\vert \right).
    $$
\end{thm}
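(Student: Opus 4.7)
The symmetry $v\mapsto -v$ pulls back the twisted form to itself with $s$ replaced by $-s$, so I may assume $s\geq 0$. My plan is to match a lower and an upper bound on $c_{HZ}$ via the magnetic geodesic flow generated by the kinetic Hamiltonian $H_0(v) = \tfrac{1}{2}g(v,v)$. Since $\omega_{FS}(\cdot,\cdot)=g(J\cdot,\cdot)$ for the K\"ahler complex structure $J$, the Hamiltonian equation reads $\nabla_{\dot\gamma}\dot\gamma = sJ\dot\gamma$, so projected orbits have constant geodesic curvature $\kappa=s/|\dot\gamma|$. The $SU(n+1)$-symmetry forces each magnetic geodesic through $(x,v)$ to remain in the totally geodesic $\cp^1\subset\cp^n$ determined by $v$; on such a $\cp^1\cong S^2$ the curves are small circles, hence all closed, and a direct spherical calculation yields an explicit period $T_{\mathrm{mag}}(r)$ at speed $r=|\dot\gamma|$.

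\textbf{Lower bound via radial Hamiltonians.} For $H(v) = f(|v|^2/2)$ with $f\colon[0,1/2]\to\R$ smooth, vanishing near $0$ and constant near $1/2$, one has $X_H = f'(|v|^2/2)\,X_{H_0}$, so the non-constant periodic orbits of $X_H$ are reparametrized closed magnetic geodesics of period $T_{\mathrm{mag}}(r)/f'(r^2/2)$. Admissibility becomes the pointwise inequality $f'(u) < T_{\mathrm{mag}}(\sqrt{2u})$ on $(0,1/2)$, so
\[
\max H \;=\; \int_0^{1/2} f'(u)\,du \;<\; \int_0^1 r\,T_{\mathrm{mag}}(r)\,dr \;=\; l\bigl(\sqrt{s^2+1}-s\bigr),
\]
by elementary integration. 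This upper limit is approached arbitrarily closely, giving $c_{HZ}\geq l(\sqrt{s^2+1}-s)$.

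\textbf{Upper bound.} The same value $l(\sqrt{s^2+1}-s)$ is the symplectic action of any closed magnetic geodesic on the boundary $\{|v|=1\}$: integrating the Liouville 1-form along the orbit and subtracting $s$ times the symplectic area of a spanning spherical cap in the relevant $\cp^1$ reproduces exactly this number. I would pursue two complementary routes to the matching upper bound: (a) use $SU(n+1)$-equivariance to reduce an arbitrary admissible Hamiltonian to a radial one (or a radial majorant) with the same maximum, whereupon the lower-bound calculation is saturated from above; or (b) realize $(T^*\cp^n, \dd\lambda - s\pi^*\omega_{FS})$ as a Marsden--Weinstein quotient of $T^*\C^{n+1}$ by the Hopf $S^1$-action at a level depending on $s$, symplectically embed $D_1\cp^n$ into a closed model with computable $c_{HZ}$ equal to $l(\sqrt{s^2+1}-s)$, and apply monotonicity.

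\textbf{Main obstacle.} The upper bound is the principal difficulty. In the untwisted case the zero section $\cp^n\subset T^*\cp^n$ is Lagrangian and the usual Weinstein neighborhood and Lagrangian-barrier techniques of Theorem A apply; for $s\neq 0$, however, the zero section becomes a symplectic submanifold with induced form $-s\omega_{FS}$, and these tools must be replaced. Identifying the correct ``twisted Weinstein model'' or the correct reduction picture that interpolates between the symplectic zero section and the magnetic fibers is the essential technical obstacle I expect to spend the most effort on; once that model is in hand, the upper bound should follow from a monotonicity argument and the computation of the reference capacity.
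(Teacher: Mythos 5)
Your lower bound is sound and is essentially the paper's argument in integrated form: the magnetic flow on $(T\cp^n,\dd\lambda-s\pi^*\omega_{FS})$ is totally periodic with period $T_{\mathrm{mag}}(r)=2\pi/\sqrt{s^2+r^2}$ on the level $|v|=r$, and your condition $f'(u)<T_{\mathrm{mag}}(\sqrt{2u})$ integrates to exactly $l(\sqrt{s^2+1}-|s|)$; the paper packages the same computation by exhibiting $H(x,v)=2\pi(\sqrt{s^2+|v|^2}-|s|)$ as the moment map of a circle action and truncating it.

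The upper bound, however, is a genuine gap, and neither of your two proposed routes closes it. Route (a) fails: an admissible Hamiltonian need not be $\mathrm{SU}(n+1)$-invariant, averaging over the group destroys all control over the periodic orbits, and a radial majorant with the same maximum has no reason to remain admissible (admissibility is a condition on dynamics, not on pointwise values, and is not monotone under majorization). Route (b) as stated also cannot give the sharp constant: the Hofer--Zehnder capacity of any closed model containing $D_1\cp^n$ (e.g.\ $\cp^n\times\cp^n$ with a split form) is at least the area of a full factor, which is strictly larger than $l(\sqrt{s^2+1}-|s|)$, so plain monotonicity into a closed manifold is too lossy. What the paper actually does is construct an explicit equivariant symplectomorphism $F:(D_1\cp^n,\dd\lambda-s\pi^*\omega_{FS})\to(\cp^n\times\cp^n\setminus\bar\Delta,\,R_1\omega_{FS}\ominus R_2\omega_{FS})$ with $R_{1,2}=\tfrac12(\sqrt{s^2+1}\pm s)$ (Theorem \ref{thmc}, proved via moment-map matching and Lemma \ref{lem1}), and then applies the Hofer--Viterbo existence theorem with $\Sigma_\infty=\bar\Delta$ and $A$ the generator of smaller area: every split-holomorphic sphere in class $A$ through a given point meets $\bar\Delta$, the moduli space is $S^{2n-1}$ with nontrivial free $S^1$-cobordism class, and this forces a fast periodic orbit for any Hamiltonian exceeding $\omega(A)=4\pi\min\{R_1,R_2\}=l(\sqrt{s^2+1}-|s|)$. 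The essential missing ingredient in your proposal is precisely this pair: the identification of the compactification with the correct pair of radii $R_1,R_2$, and the pseudo-holomorphic-sphere argument that sees only the smaller of the two.
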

\begin{Remark}
    We can also say something about the Hofer--Zehnder capacity of non-constant magnetic systems on $\cp^n$. Let $\nu\in\Omega^2(\cp^n)$ be an arbitrary closed 2-form. As $H_{dR}^2(\cp^n)$ is generated by $[\omega_{FS}]$ we know that $[\nu]=[s\omega_{FS}]$ in $H_{dR}^2(\cp^n)$ for some $s\in\R$. Thus we find an exact form $\dd\theta$ such that $\nu+\dd\theta=s\omega_{FS}$. But this means $(T^*\cp^n,\dd\lambda-\pi^*\nu)$ is symplectomorphic to $(T^*\cp^n,\dd\lambda-s\pi^*\omega_{FS})$
via the map that shifts the zero-section
$$
(x,p)\mapsto(x,p+\theta(x)).
$$
Clearly this map does not map disc bundles to disc bundles but we can still get some inclusions depending on $\theta\in\Omega^1(\cp^n)$. Denote
$$
\theta_{\max}= \max_x\vert \theta(x)\vert,
$$
then 
$$
 (D_{1}\cp^n,\dd\lambda-\pi^*\nu)\hookrightarrow(D_{1+\theta_{\max}}\cp^n,\dd\lambda-s\pi^*\omega_{FS}).
$$
\noindent
There is a freedom of choosing the primitive $\theta$ that we can use to optimize the bound. Set
$$
\rho=1+\inf_\theta\max_x\vert \theta(x)\vert,
$$
where the infimum is taken over all 1-forms $\theta\in \Omega^1(M)$ satisfying $\nu+\dd\theta=s\omega_{FS}$. Then
    $$
    c_{HZ}(D_1\cp^n, \omega_\nu)\leq l\left(\sqrt{s^2+\rho^2}-\vert s\vert\right).
    $$
\end{Remark}
\noindent
The proofs of Theorem \ref{thma} and Theorem \ref{thmb} evolve in two steps: finding a lower bound and finding an upper bound that coincides with the lower bound. For the lower we will explicitly construct an admissible Hamiltonian using geodesic billiards and the magnetic geodesic flow respectively. For the upper bound we will need more abstract theory, the theory of pseudo-holomorphic curves. There are not many cases where this theory becomes explicit, but we will construct the following symplectomorphisms, realizing compactifications of the disc bundles, that will put us in the perfect set up to study (pseudo-)holomorphic curves. In this sense the following two theorems are maybe the key contribution of this paper. \\
\begin{thm}\label{thmc} Denote $\omega_{FS}\in \Omega_2(\cp^n)$ the Fubini-Study form normalized such that the generator of $H_2(\cp^n,\Z)$ has area $4\pi$. Then there is a symplectomorphism, which is equivariant with respect to the Hamiltonian $\mathrm{SU}(n+1)$ actions, 
$$
F: (D_1 \cp^n,\dd\lambda-s\pi^*\omega_{FS})\to (\cp^n\times \cp^n\setminus \bar\Delta, R_1\omega_{FS}\ominus R_2\omega_{FS}),
$$
where $\bar\Delta\subset \cp^n\times \cp^n$ denotes the anti-diagonal divisor
$$
\bar\Delta=\lbrace (p,q)\in\cp^n\times\cp^n\ \vert \ \mathrm{dist}(p,q) \ \text{maximal}\rbrace
$$
and 
$$
R_1=\frac{1}{2}\left (\sqrt{s^2+1}+s\right),\ \ R_2=\frac{1}{2}\left (\sqrt{s^2+1}-s\right).
$$
\end{thm}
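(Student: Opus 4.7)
The plan is to build $F$ using $\mathrm{SU}(n+1)$-equivariance to reduce the construction to a one-parameter family of candidates, then fix the remaining freedom by matching the two symplectic forms. Both sides carry Hamiltonian $\mathrm{SU}(n+1)$-actions whose orbit spaces are parametrized by a single scalar: the fiber radius $|v| \in [0, 1)$ on the left, and the Riemannian distance $\mathrm{dist}(p, q) \in [0, l/2)$ on the right (using that all Fubini--Study geodesics on $\cp^n$ are closed of length $l$). Since $\cp^n$ has rank one, the generic orbit on either side is diffeomorphic to $\mathrm{SU}(n+1)/\mathrm{U}(n-1)$ and the stabilizers agree, so the two orbit families are equivariantly identified by any choice of monotone bijection between the two intervals.

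To construct $F$ explicitly, fix a base point $x_0 \in \cp^n$ and a unit vector $e \in T_{x_0}\cp^n$, and let $\gamma(t) = \exp_{x_0}(t e)$ denote the unit-speed geodesic. Set
\[
F(x_0, r e) := \bigl(\gamma(\alpha(r)),\ \gamma(-\beta(r))\bigr),
\]
with $\alpha, \beta \colon [0, 1) \to [0, l/2)$ smooth increasing functions satisfying $\alpha(0) = \beta(0) = 0$ and $\alpha(r) + \beta(r) \to l/2$ as $r \to 1$; extend to all of $D_1 \cp^n$ by $\mathrm{SU}(n+1)$-equivariance. This produces a smooth equivariant diffeomorphism onto $\cp^n \times \cp^n \setminus \bar\Delta$ sending the zero section to the diagonal $\Delta$ and limiting to $\bar\Delta$ as $|v| \to 1$.

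Matching the symplectic forms reduces by equivariance to a pointwise check at $(x_0, r e)$, and by $\mathrm{U}(n-1)$-invariance to a block computation on three pieces of the tangent space: the radial direction, the holomorphic $2$-plane spanned by $e$ and $Je$ (where $\cp^n$ has sectional curvature $4$), and the $J$-invariant orthogonal complement of real dimension $2n - 2$ (sectional curvature $1$, in the normalization where prime geodesics have length $l$). On each block, $\mathrm{d}F$ is governed by the Jacobi equation along $\gamma$, whose solutions on a rank-one symmetric space are elementary trigonometric functions. The matching produces a pair of ODEs determining $\alpha$ and $\beta$, together with algebraic conditions forcing $R_1 R_2 = \tfrac{1}{4}$ and $R_1 - R_2 = s$, which uniquely pin down the values of $R_1, R_2$ in the theorem.

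The key difficulty is the calculation on the holomorphic $2$-plane, where the complex structure $J$ couples the radial and angular directions and the magnetic twist $-s\pi^*\omega_{FS}$ enters the matching; this is what forces the asymmetry between $\alpha$ and $\beta$ (and thus between $R_1$ and $R_2$). The remaining blocks reduce to the standard Sasaki-type computation on a rank-one symmetric space, and smoothness of $F$ across the zero section together with surjectivity onto $\cp^n \times \cp^n \setminus \bar\Delta$ follows from the smoothness and injectivity properties of the exponential map within the cut radius.
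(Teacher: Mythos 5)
Your overall architecture --- an equivariant ansatz placing the two image points on a geodesic through the footpoint, reduction to one point per orbit, and a matching condition that pins down the radial profile functions and the constants $R_1,R_2$ --- parallels the paper's. But you diverge at the decisive step: you propose to verify $F^*(R_1\omega_{FS}\ominus R_2\omega_{FS})=\dd\lambda-s\pi^*\omega_{FS}$ head-on, by computing $\dd F$ through Jacobi fields and matching the forms block by block. That is exactly the computation the paper avoids: since the $\mathrm{SU}(n+1)$-orbits in $D_1\cp^n$ have codimension one, any complement of the orbit distribution is a line and hence automatically isotropic for both forms, so by Lemma \ref{lem1} it suffices that the equivariant bijection intertwine the moment maps $\mu_1(x,v)=[x,v]-sx$ and $\mu_2(a,b)=R_1a-R_2b$ --- a single linear-algebra identity checked at one point. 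In your write-up the Jacobi-field computation, the resulting ODEs for $\alpha,\beta$, and the algebraic conditions said to force $R_1R_2=\tfrac14$ and $R_1-R_2=s$ are all asserted rather than derived, so as it stands this is a plan, not a proof. You also do not address smoothness of $\alpha,\beta$ at $r=0$ (where any implicit-function or ODE argument degenerates; the paper needs a separate lemma for this), and constructing $F^{-1}$ is not a matter of injectivity of $\exp$ within the cut radius but of reconstructing $(x,v)$ from a pair $(a,b)\notin\bar\Delta$, which needs its own short argument.

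There is moreover a concrete error in the ansatz that no choice of $\alpha,\beta$ can repair: you place both image points on the geodesic $\gamma(t)=\exp_{x_0}(te)$ with $e=v/|v|$, whereas they must lie on the geodesic in the direction $Je$. Since $\mathrm{SU}(n+1)$ is semisimple and both manifolds are connected, moment maps are unique, so an equivariant symplectomorphism is forced to satisfy $\mu_2\circ F=\mu_1$. Viewing $\cp^n$ as the adjoint orbit, the chord $R_1a-R_2b$ for two points $a,b$ on $\gamma_e$ has its tangential component parallel to $e$, while $\mu_1(x_0,re)=r[x_0,e]-sx_0$ has tangential component $r\,j_{x_0}e\perp e$; the two sides therefore never agree for $r\neq0$. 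Equivalently, your $F$ differs from the correct map by the fibrewise rotation $(x,v)\mapsto(x,Jv)$, which is not a symplectomorphism of $\dd\lambda$. Replacing $\exp_{x_0}(te)$ by $\exp_{x_0}(tJe)$ fixes this, after which your $(\alpha,\beta)$ correspond to the paper's $(rc_1(r),rc_2(r))$ up to sign conventions.
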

\noindent
Observe that the case $s=0$ is included in the Theorem and recovers the untwisted case. For $\rp^n$ we have a similar theorem.
\begin{thm}\label{thmd}Denote $\omega_{FS}\in \Omega_2(\cp^n)$ the Fubini-Study form normalized such that the generator of $H_2(\cp^n,\Z)$ has area $4\pi$.
    Then there is an $SO(n+1)$-equivariant symplectomorphism
    $$
    F: (D_{1}\R\mathrm{P}^n,\dd\lambda)\to (\cp^n\setminus Q^{n-1},\omega_{FS}),
    $$
    where $Q^{n-1}\subset\cp^n$ denotes the quadric 
    $$
    Q^{n-1}:=\lbrace [z_0:\ldots:z_n]\ \vert \ z_0^2+\ldots+z_n^2=0\rbrace\subset\cp^n. 
    $$
\end{thm}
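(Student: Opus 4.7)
The plan is to construct $F$ explicitly via a polar-type formula, in analogy with the classical identification of $T^*S^n$ with the affine quadric in $\C^{n+1}$. First pass to the double cover $S^n\to\rp^n$ and view $S^n$ as the radius-$2$ sphere in $\R^{n+1}$, so that the induced round metric on $\rp^n$ has prime geodesic length $2\pi$---the normalization forced by the convention $\int_{\cp^1}\omega_{FS}=4\pi$. The main technical task is then to write down an $SO(n{+}1)\times\Z_2$-equivariant map $\tilde F\colon T^*S^n\to\C^{n+1}$ landing in the sphere $S_2^{2n+1}:=\{|z|=2\}\subset\C^{n+1}$, whose $\Z_2$-descent composed with the Hopf projection $\pi\colon S_2^{2n+1}\to\cp^n$ yields the desired $F$.

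Concretely, for $(x,p)\in T^*S^n$ with $|x|=2$ and $\hat p:=p/|p|$, I would take
$$
\tilde F(x,p):=\cos\!\bigl(f(|p|)\bigr)\,x-2i\sin\!\bigl(f(|p|)\bigr)\,\hat p,\qquad f(|p|):=\tfrac12\arcsin|p|.
$$
A short computation gives $|\tilde F|^2=4$ and $\sum_j\tilde F_j^2=4\cos(2f)=4\sqrt{1-|p|^2}$, which is nonzero for $|p|<1$, so $F:=\pi\circ\tilde F$ maps the interior of the disc bundle into $\cp^n\setminus Q^{n-1}$. The identity $\tilde F(-x,-p)=-\tilde F(x,p)$ lets $F$ descend to $D_1\rp^n$, while $A\cdot\tilde F(x,p)=\tilde F(Ax,Ap)$ for $A\in SO(n{+}1)$ gives equivariance. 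Bijectivity then follows from the cohomogeneity-one structure: the $SO(n{+}1)$-orbits of $D_1\rp^n$ and $\cp^n\setminus Q^{n-1}$ are parameterized respectively by $|p|\in[0,1)$ and $|\sum_j z_j^2|/|z|^2\in(0,1]$, and $F$ realizes the strictly monotone bijection $|p|\mapsto\sqrt{1-|p|^2}$ between these two parameters, with the transitive orbit actions matching on both sides.

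The crux is to verify $F^*\omega_{FS}=\dd\lambda$. Since $\pi^*\omega_{FS}=\omega_{\mathrm{std}}|_{S_2^{2n+1}}$ (reduction at $|z|=2$ is precisely what produces $\int_{\cp^1}\omega_{FS}=4\pi$), it is enough to show $\tilde F^*\omega_{\mathrm{std}}=\dd\lambda$. Writing $\tilde F=u+iv$ with $u=\cos f\cdot x$ and $v=-2\sin f\cdot\hat p$, expanding $\omega_{\mathrm{std}}=\sum_j du_j\wedge dv_j$ in the split coordinates $(x,|p|,\hat p)$, and using the differentiated constraints $\langle x,dx\rangle=0$, $\langle\hat p,d\hat p\rangle=0$, $\langle x,d\hat p\rangle+\langle\hat p,dx\rangle=0$, everything collapses to
$$
\tilde F^*\omega_{\mathrm{std}}=2\cos(2f)\,f'(|p|)\,d|p|\wedge\langle\hat p,dx\rangle-\sin(2f)\sum_j dx_j\wedge d\hat p_j.
$$
Comparing with $\dd\lambda=d|p|\wedge\langle\hat p,dx\rangle-|p|\sum_j dx_j\wedge d\hat p_j$ yields the compatibility conditions $2\cos(2f)\,f'=1$ and $\sin(2f)=|p|$, both satisfied by the choice $f=\tfrac12\arcsin|p|$.

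The main obstacle will be this pullback computation itself: the differentiations of the three constraint equations for $T^*S^n$ need to be done carefully and then reassembled into the canonical basis of two-forms. Once the two ODEs emerge, the function $f$ is forced, and the remaining ingredients---the $SO(n{+}1)$-equivariance, the $\Z_2$-descent, and the bijection onto $\cp^n\setminus Q^{n-1}$---all follow at once from the algebraic shape of $\tilde F$.
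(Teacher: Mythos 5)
Your proposal is correct, and it reaches the theorem by a genuinely different route than the paper. The paper realizes $\cp^n$ as the adjoint orbit of $Z\in\mathfrak{su}(n+1)$, writes down moment maps for the $SO(n+1)$-actions on $T\rp^n$ and on $\cp^n$, makes the same geodesic-flow ansatz $F(x,v)=\exp_x(-f(|v|)j_xv)$, and then deduces $F^*\omega_{FS}=\dd\lambda$ from a soft rigidity lemma: an equivariant bijection intertwining the moment maps is automatically a symplectomorphism once the orbit distribution has a complement that is isotropic for both forms, which here is free because the orbits have codimension one. The profile function is pinned down by a single matrix identity at one point (the paper's $\sin(2fr)=r$, which is exactly your $\sin(2f)=|p|$ after noting that your $f$ is the paper's $f\cdot r$). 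You instead pass to the double cover, map into $\C^{n+1}$ followed by the Hopf projection from the radius-$2$ sphere, and verify $\tilde F^*\omega_{\mathrm{std}}=\dd\lambda$ by brute force; your pullback formula and the resulting conditions $2\cos(2f)f'=1$ and $\sin(2f)=|p|$ check out (reassuringly, the first is just the $|p|$-derivative of the second, so the system is not overdetermined), and your radius-$2$ normalization is the one actually forced by $\int_{\cp^1}\omega_{FS}=4\pi$. What the paper's route buys is economy (no pullback computation) and reusability --- the identical scheme handles the twisted $D_1\cp^n$ case --- while your route is more elementary and makes the link to the classical affine-quadric model of $T^*S^n$ transparent. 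Two small points you should still nail down: smoothness of $\tilde F$ across the zero section (write $\sin(f(|p|))\hat p=\bigl[\sin(\tfrac12\arcsin|p|)/|p|\bigr]\,p$ and observe the bracketed factor is a smooth even function of $|p|$), and bijectivity, where \emph{matching transitive orbits} only gives a covering a priori; it is cleaner to exhibit the inverse directly by normalizing a representative $z=u+iv$ with $|z|=2$ and $\sum_jz_j^2>0$, which forces $\langle u,v\rangle=0$ and $|u|>|v|$ and recovers $(x,p)$ up to the $\Z_2$, paralleling the paper's construction of $F^{-1}$ via the geodesic joining $p$ to $\bar p$.
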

\noindent
Note that the 2-dimensional version of this theorem was already proven by Adaloglou in \cite{Ad22}, but using a different approach.\\
\ \\
\noindent
\textbf{Outline.} In section \ref{sec1} we construct the symplectic compactfications of $(D_1\cp^n,\dd\lambda)$ and $(D_1\rp^n,\dd\lambda)$, i.e. we prove Theorem \ref{thmc} in the case $s=0$ and Theorem \ref{thmd}. Section \ref{sec2} contains a recap on pseudoholomorphic and uses them to obtain an upper bound on the Hofer--Zehnder capacity. Section \ref{sec3} is devoted to constructing admissible Hamiltonians to get a lower bound. We will see that upper and lower bound agree and conclude that Theorem \ref{thma} holds. The last section (section \ref{sec4}) contains the magnetic version, i.e. the proofs of Theorem \ref{thmb} and Theorem \ref{thmd}.  \\
\ \\
\noindent
\textbf{Acknowledgments.} I want to thank my advisor Gabriele Benedetti for his support and suggestions on how to deal with numerous difficulties. Further I also want to thank Peter Albers and Kai Zehmisch for helpful discussions regarding this work. The author further acknowledges funding by the Deutsche Forschungsgemeinschaft (DFG, German Research Foundation) – 281869850 (RTG 2229) and 281071066 (TRR 191).

\section{Compactifications}\label{sec1}
Let $N$ denote either $\cp^n$ or $\rp^n$. We equip $\cp^n$ with the Fubini-Study metric and $\rp^n$ with the round metric, both denoted by $g$. Recall that the kinetic Hamiltonian
$$
E(x,v):=\frac{1}{2}g_x(v,v)\left (=\frac{1}{2}\vert v\vert_x^2\right)
$$
generates geodesic flow and since we chose Zoll metrics it is totally periodic of period $l/ \vert v\vert$, where $l$ denotes the length of the prime geodesics. It follows that the Hamiltonian
$$
H:=l \sqrt{2E}: TN\setminus 0_{TN}\to \R;\ (x,v)\mapsto l \vert v\vert_x
$$
generates a free circle action. One can now perform a Lerman cut (see \cite{ler}) at $H=l$ to obtain symplectic compactifications $$ 
\overline{(D_1\cp^n,\dd\lambda)} \ \text{resp.}\ \overline{(D_{1}\rp^n,\dd\lambda)}.
$$
These compactifications are per construction closed symplectic, but their symplectic geometry is not very clear from the Lerman cut construction. This section is concerned with identifying these 'abstract' compactifications with the well studied symplectic manifolds 
$$\left(\cp^n\times \cp^n,\frac{1}{2}(\omega_{FS}\ominus\omega_{FS})\right)\ \text{resp.}\ \left(\cp^n,\omega_{FS}\right),
$$
which puts us in the perfect set up to study (pseudo-) holomorphic spheres. To find the identifying symplectomorphism, we will make use of the fact that the action of the isometry groups $\mathrm{SU}(n+1)$ resp. $\mathrm{SO}(n+1)$ of $\cp^n$ resp. $\rp^n$ lift to Hamiltonian actions on the tangent bundle. We will see that indeed also the spaces $\left(\cp^n\times \cp^n,(\omega_{FS}\ominus\omega_{FS})\right)\ \text{resp.}\ \left(\cp^n,\omega_{FS}\right)$ admit Hamiltonian $\mathrm{SU}(n+1)$- resp. $\mathrm{SO}(n+1)$-actions. The map we construct will be an equivariant smooth bijection and we will use the following lemma to deduce it is a symplectomorphism.
\begin{Lemma}\label{lem1}
Let $G$ be a Lie group and $(M_i,\omega_i)$, $i=1,2$ symplectic manifolds. Assume we have a Hamiltonian G-action on $(M_i, \omega_i)$ with moment maps $\mu_i: M_i\to\mathfrak{g}^*$ for $i=1,2$. If $\varphi: M_1\to M_2$ is an equivariant smooth bijection such that 
\[
\begin{tikzcd}
M_1 \arrow[dr,"\mu_1"] \arrow[rr,"\varphi"] && M_2\arrow[dl,"\mu_2"]  \\
 & \mathfrak{g}^* 
\end{tikzcd}
\]
commutes and the distribution $\mathcal{D}\subset TM_1$ tangent to the $G$-orbits admits a complement $\Upsilon$ that is isotropic for both symplectic forms $\omega_1$ and $\varphi^*\omega_2$, then $\varphi$ is actually a symplectomorphism i.e.\ $\varphi^*\omega_2=\omega_1$. 
\end{Lemma}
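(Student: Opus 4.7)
The plan is to verify $\omega_1 = \varphi^*\omega_2$ pointwise by decomposing tangent vectors according to the splitting $TM_1 = \mathcal{D} \oplus \Upsilon$. By bilinearity and antisymmetry, it suffices to compare the two 2-forms on pairs of vectors of three types: both entries in $\mathcal{D}$, one entry in each summand, or both entries in $\Upsilon$. The last type is immediate from the hypothesis that $\Upsilon$ is isotropic for both $\omega_1$ and $\varphi^*\omega_2$, so the real content lies in showing agreement whenever at least one entry is tangent to a $G$-orbit.

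For this, I would fix $p \in M_1$, an element $X \in \mathfrak{g}$, and an arbitrary $v \in T_pM_1$, and let $X_{M_i}$ denote the fundamental vector field generated by $X$ on $M_i$. The moment map defining relation $\omega_i(X_{M_i},\cdot) = -\dd\langle\mu_i, X\rangle$ (matching the sign convention of the paper), combined with the intertwining $\mu_1 = \mu_2 \circ \varphi$, yields
\begin{align*}
\omega_1(X_{M_1}(p), v) &= -\dd\langle\mu_1, X\rangle_p(v) = -\dd\langle\mu_2, X\rangle_{\varphi(p)}(\dd\varphi_p(v)) \\
&= \omega_2\bigl(X_{M_2}(\varphi(p)), \dd\varphi_p(v)\bigr).
\end{align*}
Differentiating the equivariance relation $\varphi(g\cdot q) = g\cdot\varphi(q)$ in $g \in G$ gives $\dd\varphi_p(X_{M_1}(p)) = X_{M_2}(\varphi(p))$, so the right-hand side above collapses to $(\varphi^*\omega_2)(X_{M_1}(p), v)$. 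Since the fundamental vector fields span $\mathcal{D}_p$ at every point, this simultaneously handles the $\mathcal{D} \times \mathcal{D}$ and $\mathcal{D} \times \Upsilon$ cases; combined with the isotropy argument, we obtain $\omega_1 = \varphi^*\omega_2$ pointwise.

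Finally, the identity $\varphi^*\omega_2 = \omega_1$ together with the non-degeneracy of $\omega_1$ forces $\dd\varphi$ to be fiberwise injective; combined with the smooth bijectivity of $\varphi$ (which rules out dimension drop via Sard's theorem), this upgrades $\varphi$ to a diffeomorphism and completes the argument. The proof is essentially formal once the ingredients are in place; the subtle work in subsequent applications will be to construct the map $\varphi$ itself and to exhibit a complement $\Upsilon$ to the $G$-orbit distribution that is isotropic for \emph{both} symplectic forms simultaneously, which is what makes this lemma non-trivial to apply.
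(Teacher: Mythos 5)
Your proposal is correct and follows essentially the same route as the paper: decompose $TM_1=\mathcal{D}\oplus\Upsilon$, handle pairs involving a fundamental vector field via the moment-map identity plus the commuting triangle and equivariance, handle $\Upsilon\times\Upsilon$ by the isotropy hypothesis, and then upgrade $\varphi$ to a diffeomorphism from non-degeneracy. The only (harmless) differences are your explicit sign convention for the moment map and the added Sard remark for the dimension count.
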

\begin{proof}
We need to show that 
$$
\varphi^*\omega_2(v,w)=\omega_1(v,w)\ \ \forall v,w\in TM_1.
$$
By non-degeneracy of $\omega_1$ and $\omega_2$ it will then follow, that also $\dd\varphi$ must be non-degenerate hence a diffeomorphism.
We can decompose the tangent bundle $TM_1=\mathcal{D}\oplus\Upsilon$. The distribution tangent to the $G$-orbits is given by $\mathcal{D}_x=\lbrace (a_1^\#)_x:=\diff e^{ta}(x)\ \vert\  a\in\mathfrak{g}\rbrace$. In particular $\dd\varphi a_1^\#=a_2^\#$ by equivariance, where the subscript indicates the manifold the vector field induced by $a\in\mathfrak{g}$ lives on. So if either $v=a_1^\#\in\mathcal{D}$ or $w=a_1^\#\in\mathcal{D}$ we get the equality as follows
$$
\omega_1(a^\#_1,\cdot)=\dd(\mu_1,a)= \dd(\mu_2\circ\varphi,a)=\omega_2(a^\#_2,\dd\varphi\ \cdot)=\omega_2(\dd\varphi\ a_1^\#, \dd\varphi\ \cdot)=\varphi^*\omega_2(a^\#_1,\cdot).
$$
The second equality holds due to the moment map triangle and the fourth due to equivariance of $\varphi$. If $v,w\in \Upsilon$ we get
$$
\omega_1(v,w)=0=\omega_2(\dd\varphi( v),\dd\varphi( w))
$$
as $\Upsilon$ is isotropic with respect to both symplectic forms by assumption.
\end{proof}
\subsection{Complex projective space}
In order to give an accurate description of the Hamiltonian group actions, including formulas for the moment maps, we will consider $\cp^n$ as (co-)adjoint orbit in $\mathfrak{su}(n+1)$ and $\rp^n\subset\cp^n$ as a suborbit. Observe that for compact Lie groups such as $\mathrm{SU}(n)$ we can use the Killing form to identify $\mathfrak{g}^*\cong\mathfrak{g}$, which we will do without further notice from now on. We follow \cite{Ef04} for the presentation of $\cp^n$ as adjoint orbit. The complex projective space can be identified with the homogeneous space
$$
\cp^n=\faktor{\mathrm{SU}(n+1)}{\mathrm{S}(\mathrm{U}(n)\times\mathrm{U}(1))},
$$
where $\mathrm{S}(\mathrm{U}(n)\times\mathrm{U}(1))$ denotes the matrices in $\mathrm{U}(n)\times\mathrm{U}(1)$ of determinant one. Fix
$$
Z:=\frac{1}{n+1}\begin{pmatrix}
-i_n & \quad \\
\quad & ni 
\end{pmatrix}\in \mathfrak{su}(n+1).
$$
The stabilizer of $Z$ under the adjoint action is precisely $\mathrm{S}(\mathrm{U}(n)\times\mathrm{U}(1))$, thus $\cp^n$ can be realized as the adjoint orbit $O_Z\subset \mathfrak{su}(n+1)$.
The stabilizer group $\mathrm{S}(\mathrm{U}(n)\times\mathrm{U}(1))$ is isomorphic to $\mathrm{U}(n)$ via the map
$$
\mathrm{U}(n)\to\mathrm{S}(\mathrm{U}(n)\times\mathrm{U}(1));\ A\mapsto \begin{pmatrix}
A & \quad \\
\quad & \det A^{-1}
\end{pmatrix}.
$$
On Lie algebra level the inclusion looks like
$$
\mathfrak{u}(n)\hookrightarrow \mathfrak{su}(n+1);\ B\mapsto \begin{pmatrix}
B & \quad \\
\quad & -\mathrm{tr} B
\end{pmatrix}.
$$
Thus the Lie-algebra splits as
$$
\mathfrak{su}(n+1)=\mathfrak{p}\oplus\mathfrak{u}(n),
$$
where 
$$
\mathfrak{p}=\left\lbrace \begin{pmatrix}
0_n & -\bar z \\
z^t & 0
\end{pmatrix}\Big\vert\  z\in\C^n\right\rbrace
$$
is the subspace corresponding to the tangent space of $\cp^n$ at $Z$. Observe that for any 
$$
v=\begin{pmatrix}
0_n & -\bar z \\
z^t & 0
\end{pmatrix}\in T_Z\cp^n=\mathfrak{p}
$$
we have that
$$
j_Z v := [Z,v] = \frac{1}{n+1}\left(\begin{pmatrix}
-i_n & \quad \\
\quad & ni 
\end{pmatrix}\begin{pmatrix}
0_n & -\bar z \\
z^t & 0
\end{pmatrix}-\begin{pmatrix}
0_n & -\bar z \\
z^t & 0
\end{pmatrix}\begin{pmatrix}
-i_n & \quad \\
\quad & ni 
\end{pmatrix}\right)=\begin{pmatrix}
0_n & -(\overline{iz}) \\
(iz)^t & 0
\end{pmatrix}
$$
recovers the standard complex structure on $\cp^n$. Further the negative of the Killing form 
$$
(X,Y):=-2\mathrm{tr}(X\cdot Y),\ \ \ \forall\ X,Y\in \mathfrak{su}(n+1)
$$
yields an $\mathrm{Ad}$-invariant scalar product on $\mathfrak{su}(n+1)$. Restricted to $\mathfrak{p}$ it is compatible with $j_Z$ and equivariantly extended it yields the standard invariant Kähler structure of $\cp^n$, i.e. $g(j\cdot,\cdot)=\omega_{FS}(\cdot,\cdot)$. 
\begin{Lemma}
    The induced resp.\ diagonal group action of $\mathrm{SU}(n+1)$ on $(T\cp^n,\dd\lambda)$ resp.\ $(\cp^n\times\cp^n, \omega_{FS}\ominus \omega_{FS})$ is Hamiltonian. The moment maps are given by 
    $$
    \mu_1:T\cp^n\to\mathfrak{su}(n+1);\ (x,v)\mapsto [x,v]
    $$
    resp.\
    $$
    \mu_2: \cp^n\times\cp^n \to \mathfrak{su}(n+1);\ (a,b)\mapsto a-b.
    $$
\end{Lemma}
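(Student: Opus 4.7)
The plan is to verify the two moment map formulas independently, exploiting in both cases the realization of $\cp^n$ as the adjoint orbit $O_Z\subset\mathfrak{su}(n+1)$, and then to note that the actions are automatically Hamiltonian because the resulting moment maps are $\mathrm{Ad}$-equivariant by inspection.

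For $\mu_1$, the starting point is that $\mathrm{SU}(n+1)$ acts on $\cp^n$ by isometries of the Fubini-Study metric, so the lifted action on $T\cp^n$ (identified with $T^*\cp^n$ via $g$, as in the definition of $\lambda$) preserves the tautological 1-form $\lambda$. Applying Cartan's formula to $L_{a^\#}\lambda=0$ yields
$$
\iota_{a^\#}\dd\lambda \;=\; -\dd\bigl(\lambda(a^\#)\bigr),
$$
so, with the paper's sign convention $\dd H=-\iota_{X_H}\omega$, the function $H_a(x,v):=\lambda_{(x,v)}(a^\#)$ is a Hamiltonian for $a^\#$. Using $\lambda_{(x,v)}(\xi)=g_x(v,\dd\pi(\xi))$ and the orbit description $a^\#|_x=[a,x]$, this becomes $H_a(x,v)=g_x(v,[a,x])$. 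Identifying $\mathfrak{g}^*\cong\mathfrak{g}$ via $(X,Y)=-2\mathrm{tr}(XY)$ and applying cyclicity of the trace, one rewrites
$$
g_x(v,[a,x])\;=\;(v,[a,x])\;=\;([x,v],a),
$$
which is exactly the pairing of $[x,v]$ with $a$. Hence $\mu_1(x,v)=[x,v]$.

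For $\mu_2$, the plan is to invoke the classical fact that on a coadjoint orbit of a compact Lie group, equipped with the Kirillov-Kostant-Souriau form, the moment map of the coadjoint action is the tautological inclusion into $\mathfrak{g}^*$. Since $(\cp^n,\omega_{FS})$ is, with the normalization fixed in Section~\ref{sec1}, symplectomorphic to $(O_Z,\omega_{\mathrm{KKS}})$, the moment map on a single factor is $a\mapsto a$; on the second factor carrying the opposite form $-\omega_{FS}$ it becomes $b\mapsto -b$; and by additivity under products the diagonal action on $(\cp^n\times\cp^n,\omega_{FS}\ominus\omega_{FS})$ has moment map $\mu_2(a,b)=a-b$. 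Equivariance of both $\mu_1$ and $\mu_2$ under $\mathrm{Ad}$ is then immediate from the formulas (the maps are built from Lie brackets and differences, which intertwine with conjugation), which in particular certifies both actions as Hamiltonian.

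The only delicate point is calibration of constants: I must check that the normalization adopted here --- $\omega_{FS}$ fixed so that a projective line has area $4\pi$, together with the pairing $(X,Y)=-2\mathrm{tr}(XY)$ used to identify $\mathfrak{g}^*\cong\mathfrak{g}$ --- is such that both displayed formulas hold with no auxiliary multiplicative factor. I would verify this by evaluating $\omega_{FS}$ and the Kirillov form on one standard pair of vectors in $\mathfrak{p}=T_Z O_Z$ and confirming they agree; any remaining scalar would be absorbed into the definition of the inner product on $\mathfrak{su}(n+1)$.
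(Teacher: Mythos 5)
Your argument is essentially the paper's: for $\mu_1$ the same computation combining $\mathcal{L}_{a^\#}\lambda=0$ (Cartan's formula) with $\lambda(a^\#)=g(v,\dd\pi(a^\#))$ and $\mathrm{Ad}$-invariance of the pairing $(X,Y)=-2\mathrm{tr}(XY)$, and for $\mu_2$ the same appeal to the classical fact that the inclusion of an adjoint orbit is a moment map, extended to the product with a sign flip on the second factor. The normalization concern you defer at the end is already settled by the paper's setup, since there $\omega_{FS}$ is \emph{defined} as the invariant K\"ahler form obtained from the orbit $O_Z$ and the pairing $-2\mathrm{tr}(XY)$, so no auxiliary scalar can appear.
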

\begin{proof} 
    For the first formula we compute for an arbitrary element $a\in \mathfrak{su}(n+1)$
    $$
        \dd (([x,v],a))=\dd ((v,[x,a]))=\dd (g(v,\dd\pi(a^\#)))=\dd(\lambda(a^\#))=\iota_{a^\#}\dd\lambda
    $$
    as $\lambda$ is invariant under the flow of $a^{\#}$ and as a consequence $\mathcal{L}_{a^{\#}}\lambda=0$.\\
    \ \\
    The formula for $\mu_2$ follows immediately from the fact that the inclusion as adjoint orbit is a moment map for the group action on an adjoint orbit \cite[Ch.\ 1]{Kr04}.
\end{proof}
\noindent
We can now prove the following theorem using the Lemmas above.
\begin{Theorem}\label{thm1}
There is a symplectomorphism, which is equivariant with respect to the Hamiltonian $\mathrm{SU}(n+1)$ actions, 
$$
F: (D_2 \cp^n,\dd\lambda)\to (\cp^n\times \cp^n\setminus \bar\Delta, \omega_{FS}\ominus\omega_{FS}),
$$
where $\bar\Delta\subset \cp^n\times \cp^n$ denotes the anti-diagonal divisor
$$
\bar\Delta=\lbrace (a,b)\in\cp^n\times\cp^n\ \vert \ \mathrm{dist}(a,b) \ \text{maximal}\rbrace.
$$
\end{Theorem}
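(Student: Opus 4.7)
The strategy is to write $F$ down by hand and then check the hypotheses of Lemma~\ref{lem1}, which will promote $F$ to a symplectomorphism. Guided by the requirement $\mu_2\circ F=\mu_1$, i.e.\ $a-b=[x,v]$, I set
\[
F(x,v):=\bigl(\mathrm{Ad}_{\exp(-\phi(|v|)\hat v)}(x),\ \mathrm{Ad}_{\exp(\phi(|v|)\hat v)}(x)\bigr),\quad \hat v:=v/|v|,\quad \phi(r):=\arcsin(r/2),
\]
extended by $F(x,0):=(x,x)$. Since $\phi(|v|)\hat v=\tfrac{v}{2}+O(|v|^3)$, the map is smooth across the zero section. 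Geometrically $a$ and $b$ are the endpoints of an arc of length $2\phi(|v|)<l/2$ of the closed geodesic through $x$ in direction $\hat v$, so $a$ is never the antipode of $b$ as long as $|v|<2$.

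For the moment-map triangle, I expand $\mathrm{Ad}_{\exp(\pm\phi\hat v)}(x)$ as a power series in $\mathrm{ad}_{\hat v}$; only odd powers survive in the difference, giving
\[
a-b\;=\;-2\sum_{k\ge 0}\frac{\phi^{2k+1}}{(2k+1)!}\,\mathrm{ad}_{\hat v}^{2k+1}(x).
\]
The key Lie-algebraic identity is $\mathrm{ad}_{\hat v}^{2}[\hat v,x]=-[\hat v,x]$ for every unit $\hat v\in\mathfrak p\cong T_x\cp^n$ and every $x$ on the adjoint orbit. By $\mathrm{SU}(n+1)$-equivariance I reduce to $x=Z$, and then use the transitive action of $\mathrm S(\mathrm U(n)\times\mathrm U(1))$ on the unit sphere of $\mathfrak p\cong\C^n$ to further reduce to $\hat v$ supported in the $(1,n{+}1)$-off-diagonal block; this makes the identity a direct $\mathrm{SU}(2)$-computation. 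Granted the identity, $\mathrm{ad}_{\hat v}^{2k+1}(x)=(-1)^k[\hat v,x]$, the series collapses to $-2\sin\phi\cdot[\hat v,x]$, and the choice $\sin\phi=|v|/2$ makes it equal to $-|v|[\hat v,x]=[x,v]=\mu_1(x,v)$.

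Equivariance follows from $\mathrm{Ad}_g\exp(A)\mathrm{Ad}_{g^{-1}}=\exp(\mathrm{Ad}_g A)$, and bijectivity is checked orbit-by-orbit: the $\mathrm{SU}(n+1)$-orbits in $D_2\cp^n$ (resp.\ in $\cp^n\times\cp^n\setminus\bar\Delta$) are the level sets of $|v|\in[0,2)$ (resp.\ of $d(a,b)\in[0,l/2)$), and $2\phi:[0,2)\to[0,l/2)$ is the matching bijection between the labels. For the isotropic-complement hypothesis of Lemma~\ref{lem1}, note that on the open dense set $\{v\ne 0\}$ the orbit distribution $\mathcal D$ has real codimension one, and the vertical radial vector $R_{(x,v)}=v\in T_v(T_x\cp^n)$ spans a one-dimensional transverse complement $\Upsilon$ that is automatically isotropic for every two-form; Lemma~\ref{lem1} then yields $F^*(\omega_{FS}\ominus\omega_{FS})=\dd\lambda$ on $\{v\ne 0\}$, and continuity extends the equality across the zero section. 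The real hurdle is the Lie-algebraic identity above, which is the algebraic incarnation of the fact that the Fubini-Study metric in this normalization has holomorphic sectional curvature one; with the correct $\phi$ in hand, everything else is formal.
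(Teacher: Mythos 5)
Your construction is the paper's own proof in different clothing: the same ansatz (the two endpoints of a geodesic arc centered at $x$ in direction $\pm j_x\hat v$, since $\tfrac{d}{dt}\big|_{0}\mathrm{Ad}_{\exp(-t\hat v)}x=[x,\hat v]=j_x\hat v$), the same reduction via Lemma \ref{lem1} and equivariance to a single moment-map identity at one point, and the same codimension-one argument for the isotropic complement; your Lie-series collapse via $\mathrm{ad}_{\hat v}^2[\hat v,x]=-[\hat v,x]$ is a cleaner packaging of the paper's explicit matrix trigonometry and yields the same condition $\sin\phi=r/2$. The one step thinner than the paper's is bijectivity: matching the orbit labels $|v|\leftrightarrow d(a,b)$ shows $F$ carries orbits onto orbits and is surjective, but injectivity on each orbit still requires that the stabilizer of $(a,b)$ fixes $(x,v)$ (immediate from uniqueness of the minimizing geodesic between $a$ and $b$, or from the paper's explicit inverse via its midpoint) — an easy fix rather than a gap.
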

\begin{proof}
We make the ansatz
$$ F: D_2 \cp^n\to \cp^n\times \cp^n\setminus\bar\Delta;\ \ \ (x,v)\mapsto \left(\exp_x(-c(r)j_xv),\exp_x(c(r)j_xv)\right),
$$
for a function $c$ depending on $r:=\vert v\vert$.
The map $F$ is depicted in Figure \ref{f1} for the two dimensional case of $\cp^1$.
\begin{figure}
	\centering
 \includegraphics[width=0.4\textwidth]{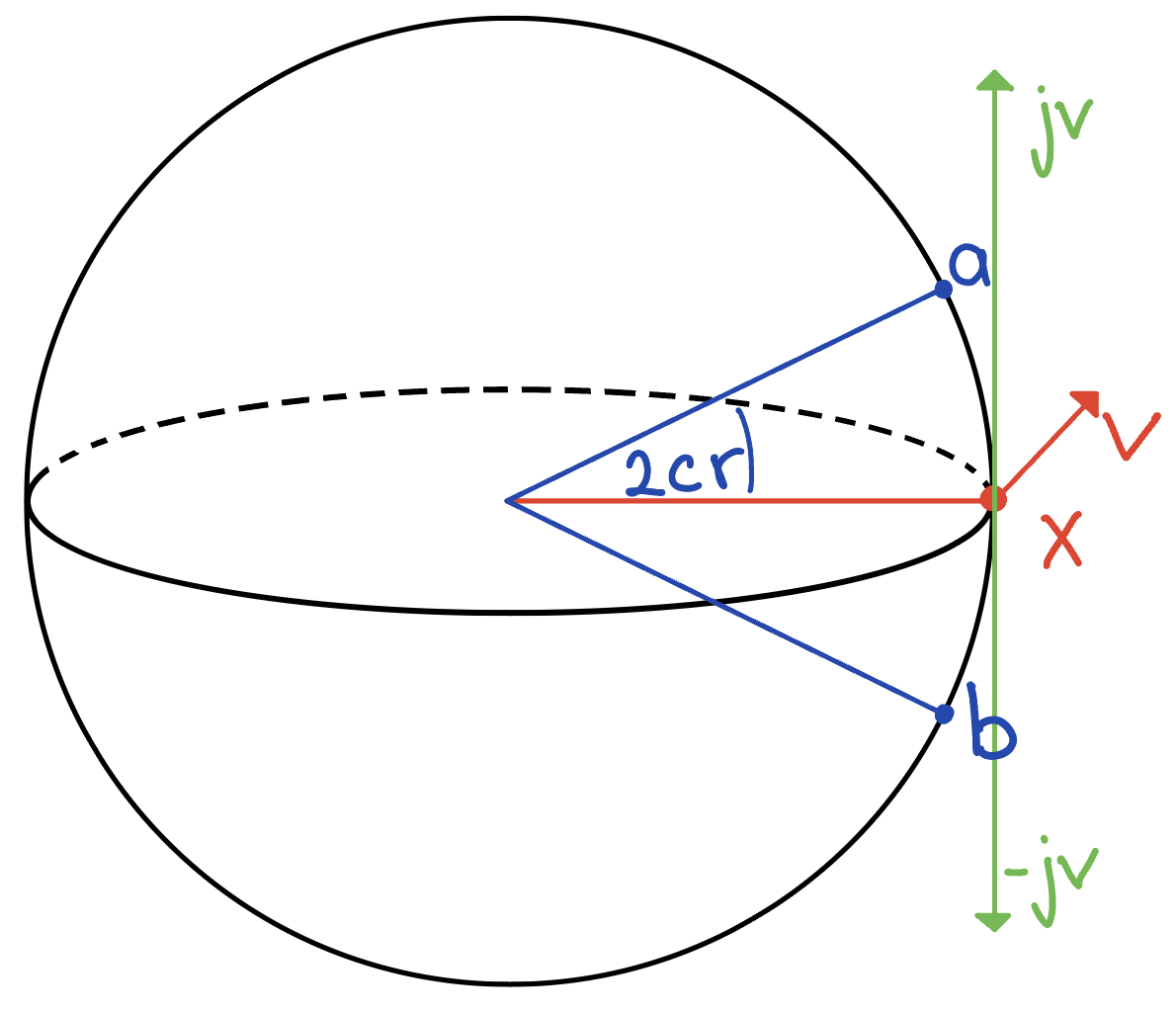}
	\caption{\textit{The Lie algebras $\mathfrak{su}(2)$ and $\R^3$ with cross product as Lie bracket are isomorphic. This isomorphism identifies the adjoint orbit $\cp^1$ with the round 2-sphere. Now take a point $(x,v)\in TS^2$ (in red). If we fix the radius of the sphere to be one the complex structure is given by $jv=x\times v$ (in green). This is actually the same as the moment map, i.e. $\mu_1(x,v)=x\times v=jv$. We follow the geodesic (great circle) through $x$ in direction $jv$ resp.\ $-jv$ until the points $a$ and $b$ satisfy $\mu_2(a,b)=a-b=jv$. The angle between $a$ and $b$ uniquely determines the function $c$. That the higher dimensional case works the same is no surprise as one can use the symmetry group $\mathrm{SU}(n+1)$ to reduce everything to the two dimensional case.  }}
    \label{f1}
\end{figure}
For the proof we want to use Lemma \ref{lem1}, thus we need to show that the moment map triangle commutes. We will determine $c$ imposing the relation of the moment maps 
$$
\mu_1(x,v)=\mu_2(F(x,v)).
$$
Per construction the moment maps and the map $F$ are equivariant. Hence, it is enough to check the moment map triangle at some point
$$
x=Z=\frac{1}{n+1}\begin{pmatrix}
-i_{n-1} & \quad & \quad\\
\quad & -i & 0\\
\quad & 0 & ni
\end{pmatrix},\ \ v=\frac{r}{2}\begin{pmatrix}
0_{n-1} & \quad & \quad\\
\quad & 0 & -i\\
\quad & -i & 0
\end{pmatrix} \in T_Z\cp^n,\ r=\vert v\vert>0,
$$
as the group action is transitive on the sphere subbundle of $T\cp^n$.
We compute 
$$
\mu_1(x,v)=[x,v]=\frac{r}{2}\begin{pmatrix}
0_n & \quad & \quad\\
\quad & 0 & 1\\
\quad & -1 & 0
\end{pmatrix}.
$$
The geodesic through $x$ in direction $j_x v$ is given by
$$
\gamma_{(x,j_xv)}(t)=\mathrm{Ad}_{\exp(-tv)}x=\frac{1}{n+1}\begin{pmatrix}
-i_{n-1} & \quad & \quad\\
\quad & -i(\cos(rt)^2-n\sin(rt)^2) & (n+1)\sin(rt)\cos(rt)\\
\quad & -(n+1)\sin(rt)\cos(rt) & i(n\cos(rt)^2-\sin(rt)^2)
\end{pmatrix}.
$$
It parametrizes an affine circle in $\mathfrak{su}(n+1)$ since
$$
V(t):=\frac{1}{2}\left(\gamma_{(x,j_xv)}(t)-\gamma_{(x,j_x v)}(t+\frac{\pi}{2r})\right)=\frac{1}{2}\begin{pmatrix}
0_{n-1} & \quad & \quad\\
\quad & -i\cos(2rt) & \sin(2rt)\\
\quad & -\sin(2rt) & i\cos(2rt)
\end{pmatrix}
$$
satisfies $\vert V\vert^2=-2\mathrm{tr}(V^2)=1$.
The circle is centered at
$$
y:=\frac{1}{2}\left(\gamma_{(x,j_x v)}(t)+\gamma_{(x,j_x v)}(t+\frac{\pi}{2r})\right)=\frac{1}{n+1}\begin{pmatrix}
-i_{n-1} & 0 & 0\\
0 & \frac{n-1}{2}i & 0\\
0 & 0 & \frac{n-1}{2}i
\end{pmatrix}.
$$
Observe that $\gamma_{(x,j_xv)}(t)=V(t)+y$.
\begin{Remark}
    The equality $\vert V\vert=1$ shows that the Fubini-Study form is normalized such that prime geodesics have length $l=2\pi$.
\end{Remark}
\noindent
Now the second moment map is given by
$$
\mu_2(F(x,v))=\gamma_{(x,j_xv)}(-c)-\gamma_{(x,j_xv)}(c)=V(-c)-V(c).
$$
Comparing the coefficients of the two matrices yields the following equation for $c$:
$$
    2\sin(2cr)=r.
$$
The moment map triangle commutes if and only if this equality holds. We immediately see that the equation can be solved for $c$ if and only if $r\in (-2,2)$, i.e.
$$
c: (-2,2)\to \R;\ r\mapsto \frac{\sin^{-1}(r/2)}{2r}
$$
and is therefore smooth and even. This finishes the construction of the smooth map $F:D_2\cp^n\to\cp^n\times\cp^n$. But is it a bijection? We can explicitly give an inverse. A point $(a,b)\in \cp^n\times\cp^n\setminus\bar\Delta$ defines a unique shortest geodesic $\gamma$ connecting $a=\gamma(0)$ to $b=\gamma(1)$. Now we set $x=\gamma(1/2)$ and $v=-j(a-b)$ in total we obtain
$$
F^{-1}(a,b)=(\gamma(1/2), -j(a-b)).
$$
Thus $F$ is a smooth bijection intertwining the moment maps. The construction of $F^{-1}$ in the 1-dimensional case is drawn in figure \ref{f2}.\\
 \begin{figure}
	\centering
 \includegraphics[width=0.4\textwidth]{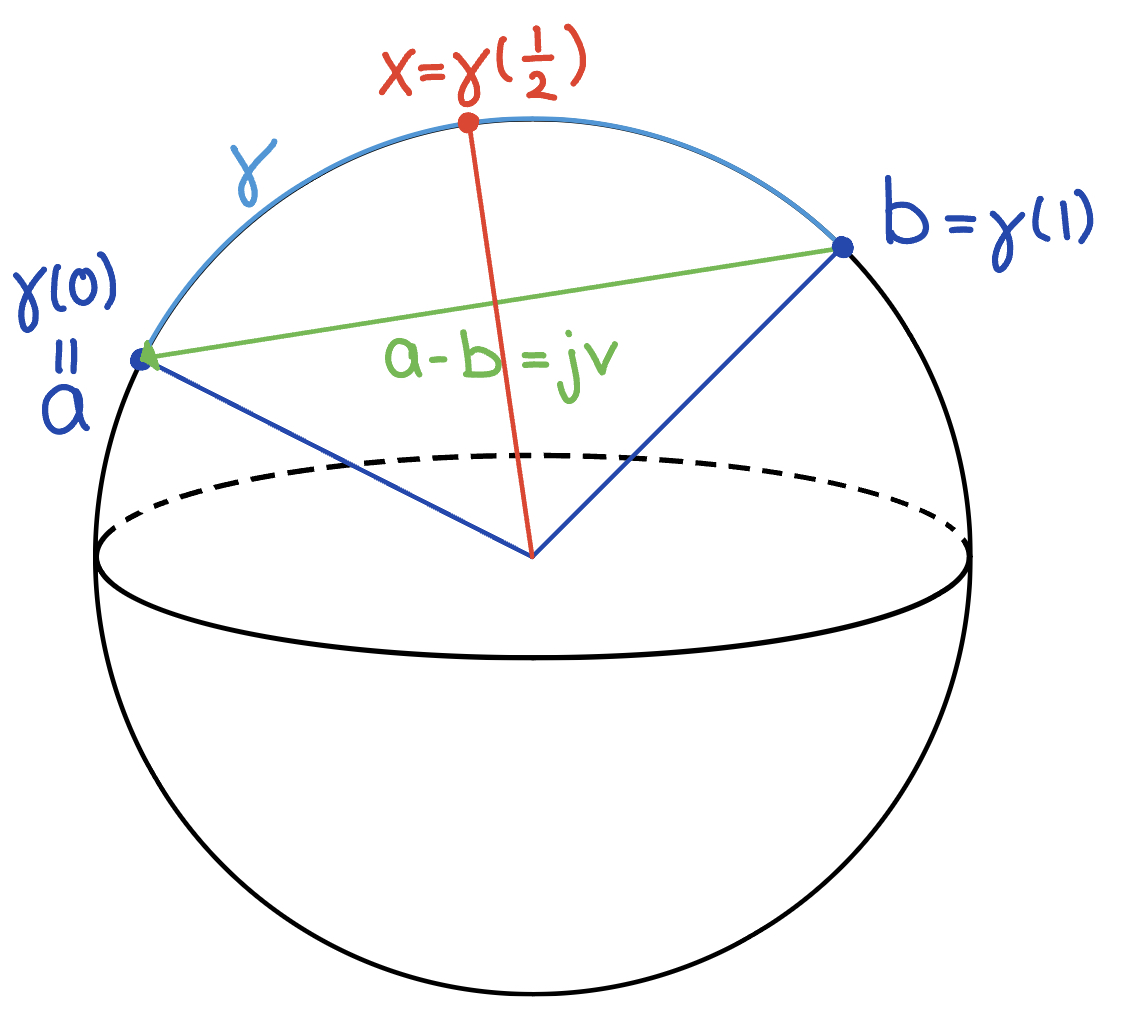}
	\caption{\textit{For any two points $a, b\in S^2$ (dark blue) that are not antipodal there is a unique shortest geodesic $\gamma$ (light blue) such that $\gamma(0)=a$ and $\gamma(1)=b$. We set $x:= \gamma(1/2)$ (red) and $jv=a-b$ (green). As $x$ is orthogonal to $a-b$ and thus orthogonal to $v=-j(jv)$ we find that indeed $v\in T_xS^2$, thus $F^{-1}$ is well defined. In higher dimensions the argument reduces to the 1-dimensional case as the symmetry group acts transitively on the totally geodesic copies of $\cp^1\cong S^2$.}}
    \label{f2}
\end{figure}
\ \\
Finally observe that the codimension of the orbits of $SU(n+1)$ is one. Thus any complement of $\mathcal{D}$ is 1-dimensional and therefore isotropic for both symplectic forms $\dd\lambda$ and $F^*(\omega_{FS}\ominus\omega_{FS})$. It follows from Lemma \ref{lem1} that $F$ is indeed a symplectomorphism. 
\end{proof}

\begin{Remark}
    For $(x,v)\in T\cp^n$ denote $(a,b):=F(x,v)\in\cp^n\times\cp^n$. Observe now that
    $$ F(H(x,v))=l\vert v\vert=l\vert jv\vert=l\vert \mu_1(x,v)\vert=l\vert \mu_2(F(x,v))=l\vert a-b\vert.$$
    We see that $H\circ F$ extends to a smooth map on $\cp^n\times\cp^n\setminus\Delta$ that is critical along $\bar\Delta$. We may therefore indeed interpret the map $F$ as a realization of the Lerman cut with respect to $H$ at the value $2l$.
\end{Remark}
\subsection{Real projective space}
Using the description of $\cp^n$ as adjoint orbit $O_Z\subset \mathfrak{su}(n+1)$ explained before, we can identify $\rp^n$ as an $\mathrm{SO}(n+1)\subset \mathrm{SU}(n+1)$ sub orbit 
$$
\R\mathrm P^n\cong \left \lbrace \mathrm{Ad}_g(Z)\vert \ \ g\in\mathrm{SO}(n+1)\right \rbrace \subset O_Z\cong \cp^n.
$$
It is fixed under the involution
$$
I:O_Z\to O_Z;\ p\mapsto p^T,
$$
as can be seen by direct computation
$$
(gZg^{-1})^T= g Z^T g^T = g Z g^{-1}\ \ \forall g\in\mathrm{SO}(n+1).
$$
The involution $I$ is antiholomorphic, because for all $v\in\mathfrak{p}$ we find
$$
\dd I_Z( j_Z(v))=[Z,v]^T=(Zv-vZ)^T=-(Zv^T-v^TZ)=-[Z,v^T]=-j_Z(\dd I_Z (v)).
$$
\begin{Lemma}
    The induced resp.\ subgroup action of $\mathrm{SO}(n+1)\subset \mathrm{SU}(n+1)$ on $(T\rp^n,\dd\lambda)$ resp.\ $(\cp^n, \omega_{FS})$ is Hamiltonian. The moment maps are given by 
    $$
    \mu_1:T\rp^n\to\mathfrak{so}(n+1);\ (x,v)\mapsto [x,v]
    $$
    resp.\
    $$
    \mu_2: \cp^n \to \mathfrak{so}(n+1);\ a\mapsto \mathrm{Re}(a).
    $$
\end{Lemma}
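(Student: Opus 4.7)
The plan is to deduce both moment-map formulas from the $\mathrm{SU}(n+1)$-equivariant moment maps of the previous Lemma, by restricting to the closed subgroup $\mathrm{SO}(n+1)\subset\mathrm{SU}(n+1)$ and, in the first case, to the $\mathrm{SO}(n+1)$-invariant submanifold $T\rp^n\subset T\cp^n$. The key preliminary fact is the identification of the orthogonal projection $\pi\colon\mathfrak{su}(n+1)\to\mathfrak{so}(n+1)$ with respect to the negative Killing form $(X,Y)=-2\,\mathrm{tr}(XY)$. I would verify that the orthogonal complement of $\mathfrak{so}(n+1)$ inside $\mathfrak{su}(n+1)$ is the space $i\cdot\mathrm{Sym}_0(n+1,\R)$ of purely imaginary, traceless, symmetric matrices: for $X$ real skew and $S$ real symmetric, $\mathrm{tr}(XS)=\mathrm{tr}((XS)^T)=-\mathrm{tr}(XS)$ and hence vanishes. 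This gives $\pi(A)=(A+\overline A)/2=\mathrm{Re}(A)$.

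The formula for $\mu_2$ then follows immediately from the standard fact that restricting a Hamiltonian $G$-action to a closed subgroup $H$ yields a Hamiltonian $H$-action with moment map $\pi_{\mathfrak{h}}\circ\mu_G$: applied to the $\mathrm{SU}(n+1)$-moment map $a\mapsto a$ on the adjoint orbit $\cp^n$, this gives $\mu_2(a)=\mathrm{Re}(a)$.

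For $\mu_1$, I would first check that $(T\rp^n,\dd\lambda)$ sits as an $\mathrm{SO}(n+1)$-invariant symplectic submanifold of $(T\cp^n,\dd\lambda)$. Since the inclusion $\rp^n\hookrightarrow\cp^n$ is isometric, the footpoint projection and the metric identification $TN\cong T^*N$ commute with the inclusion, so $\lambda_{\cp^n}|_{T\rp^n}=\lambda_{\rp^n}$ and hence the symplectic forms restrict compatibly. Restricting $\pi\circ\mu_1^{\mathrm{SU}}$ to $T\rp^n$ then gives moment map $(x,v)\mapsto\mathrm{Re}([x,v])$. It remains to show that the $\mathrm{Re}$ is superfluous, i.e.\ $[x,v]\in\mathfrak{so}(n+1)$ for every $(x,v)\in T\rp^n$. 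Here I would use the (linear!) antiholomorphic involution $I(p)=p^T$ introduced above: $\rp^n=\mathrm{Fix}(I)$, and since $I$ is linear, $\dd I_x=I$, so tangent vectors to the fixed locus satisfy $v^T=v$ as well. Then
\[
[x,v]^T=v^Tx^T-x^Tv^T=vx-xv=-[x,v],
\]
so $[x,v]$ is skew-symmetric; combined with being skew-Hermitian, this forces $[x,v]$ to be real, i.e.\ in $\mathfrak{so}(n+1)$.

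The most delicate point of this plan is the symplectic-submanifold verification for $T\rp^n\subset T\cp^n$, since it depends on the interplay of the Fubini--Study and round normalizations; everything else is direct linear algebra. If one wants to sidestep the submanifold argument entirely, one can simply rerun the computation of the previous Lemma verbatim on $T\rp^n$: for $a\in\mathfrak{so}(n+1)$ the Liouville form $\lambda$ is still $\mathrm{SO}(n+1)$-invariant, so $\iota_{a^\#}\dd\lambda=\dd(\lambda(a^\#))=\dd((v,[x,a]))=\dd(([x,v],a))$, and the argument that $[x,v]\in\mathfrak{so}(n+1)$ then identifies $\mu_1^a$ as claimed.
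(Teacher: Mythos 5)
Your proposal is correct, and it reaches both formulas by a genuinely different (arguably cleaner) route than the paper. For $\mu_2$ the paper does a direct computation: it differentiates the $\mathrm{Ad}$-invariance of the pairing $(\mathrm{Ad}_{e^{t\xi}}p,\mathrm{Ad}_{e^{t\xi}}a)$ and uses the orthogonality of real and imaginary parts under the Killing form to conclude $\dd(\mathrm{Re},a)=-\iota_{a^\#}\omega_{FS}$; you instead invoke the general principle that restricting a Hamiltonian $G$-action to a closed subgroup $H$ yields the moment map $\pi_{\mathfrak h}\circ\mu_G$, and identify $\pi_{\mathfrak{so}(n+1)}$ with $\mathrm{Re}$ via the decomposition $\mathfrak{su}(n+1)=\mathfrak{so}(n+1)\oplus i\,\mathrm{Sym}_0(n+1,\R)$. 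The orthogonality computation you do is exactly the one hidden inside the paper's direct calculation, so the two arguments are equivalent in substance, but yours makes the structural reason visible and generalizes immediately. For $\mu_1$ the paper also restricts the $\mathrm{SU}(n+1)$ moment map and only checks $[x,v]\in\mathfrak{so}(n+1)$, but it does so at a single point $(x,v)$ with purely imaginary entries and appeals to transitivity of $\mathrm{SO}(n+1)$ on the unit sphere bundle; your argument via the involution $I(p)=p^T$ --- so that $x^T=x$, $v^T=v$ on $\rp^n$, whence $[x,v]^T=-[x,v]$ and, combined with skew-Hermiticity, $[x,v]$ is real --- is pointwise-free and slightly more robust. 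Your caution about $T\rp^n\subset T\cp^n$ being a symplectic submanifold with matching Liouville forms is warranted (the paper glosses over it), and your fallback of rerunning the $\iota_{a^\#}\dd\lambda=\dd(([x,v],a))$ computation intrinsically on $T\rp^n$ disposes of it completely. No gaps.
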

\begin{proof}
    The map $\mu_1$ is the restriction of the moment map for $(T\cp^n,\dd\lambda)$.
    We therefore only need to check that the map actually takes values in $\mathfrak{so}(n+1)$. From the description as suborbit it is only clear that $[x,v]\in\mathfrak{su}(n+1)$. The action of $\mathrm{SO(n+1)}$ on $T\R \mathrm{P}^n$ is transitive on the sphere subbundle, thus it is fine to check this at some point
    $$
    x=\frac{1}{n+1}\begin{pmatrix}
-i_{n-1} & \quad & \quad\\
\quad & -i & 0\\
\quad & 0 & ni
\end{pmatrix}\in \R\mathrm{P}^n,\ \ v=\frac{r}{2}\begin{pmatrix}
0_{n-1} & \quad & \quad\\
\quad & 0 & -i\\
\quad & -i & 0
\end{pmatrix}\in T_x\R \mathrm{P}^n\cong [x,\mathfrak{so}(n+1)].
    $$
Observe that $x=Z$ and $r=\vert v\vert$.
Indeed for this choice we see
$$
\overline{[x,v]}=[\bar x,\bar v]=[-x,-v]=[x,v]
$$
and therefore $[x,v]\in\mathfrak{so}(n+1)$. \\
\ \\
Next we look at the map $\mu_2$. Equivariance is clear as conjugation with a real matrix commutes with taking the real part. The rest of the proof is the following straight forward computation. Any tangent vector $v\in T_p\cp^n$ is of the form $v=\xi^\#_p$ for some $\xi\in\mathfrak{su}(n+1)$, thus for every $a\in \mathfrak{so}(n+1)$ we find
   \begin{align*}
       0&=\diff (\mathrm{Ad}_{e^{t\xi}}(p),\mathrm{Ad}_{e^{t\xi}} (a))\\
       &= \diff ( \mathrm{Ad}_{e^{t\xi}}(p), a)+(p,[\xi, a])\\
       &=\diff ( \mathrm{Re}(\mathrm{Ad}_{e^{t\xi}} (p)), a)+(p, [\xi, a])\\
       &=\dd (\mathrm{Re},a)(\xi^{\#})+\omega_{FS}(\xi^\#, a^\#)\\
       &=\dd (\mathrm{Re},a)(v)+\omega_{FS}(v, a^\#).
   \end{align*}
For the third equality we used that real and imaginary part are orthogonal with respect to the Killing form, i.e. for two real matrices $a,b$ such that $a, ib \in \mathfrak{su}(n+1)$ we know that 
$$
(a,ib)=-2\mathrm{tr} (a\cdot ib)=-2i\mathrm{tr} (a\cdot b)\in\R\ \ \Rightarrow\ \ (a, ib)=0.
$$
For the fourth equality we used that
$$
\omega_{FS}(\xi^\#,a^\#)=g(j\xi^\#,a^\#)=([p,\xi],a)=(p, [\xi,a]).
$$
\end{proof}
\noindent
We can now prove the following theorem analogously to Theorem \ref{thm1}.
\begin{Theorem}\label{thm2}
    There is an  $SO(n+1)$-equivariant symplectomorphism
    $$
    F: (D_{1}\R\mathrm{P}^n,\dd\lambda)\to (\cp^n\setminus Q^{n-1},\omega_{FS}),
    $$
    where $Q^{n-1}\subset\cp^n$ denotes the quadric 
    $$
    Q^{n-1}:=\lbrace [z_0:\ldots:z_n]\ \vert \ z_0^2+\ldots+z_n^2=0\rbrace\subset\cp^n. 
    $$
\end{Theorem}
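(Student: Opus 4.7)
The plan is to mirror the approach used in Theorem~\ref{thm1}. I will make the ansatz
$$
F(x,v) = \exp_x\bigl(c(\vert v\vert)\, j_x v\bigr),
$$
where $\exp$ is the exponential map of the Fubini--Study metric on $\cp^n$ and $j_x v$ is the complex structure applied to $v \in T_x\rp^n \subset T_x\cp^n$. Geometrically this runs the $\cp^n$-geodesic $\gamma_{(x,j_xv)}(t) = \mathrm{Ad}_{\exp(-tv)}(x)$ for time $c$ in the direction normal to $\rp^n$ inside $\cp^n$. By $\mathrm{SO}(n+1)$-equivariance (the action is transitive on each sphere bundle $\{\vert v\vert = r\}$ of $T\rp^n$), it suffices to impose the moment-map identity $\mu_1(x,v) = \mu_2(F(x,v))$ at the same base point $(Z, v)$ used in the complex case. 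Extracting the real part of the explicit matrix for $\gamma(c)$ already computed there yields the single scalar equation
$$
\sin(2rc) = r,
$$
in contrast to $2\sin(2rc) = r$ in the complex setting: the factor of two disappears because $\mu_2$ is now the real part of one point rather than the difference of two. Hence $c(r) = \sin^{-1}(r)/(2r)$, smooth and even on $(-1,1)$, which fixes the correct disc radius at $1$.

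Next I will translate $\gamma(c)$ back into homogeneous coordinates on $\cp^n$ via the standard adjoint-orbit embedding $[z]\mapsto \tfrac{i}{n+1}\bigl((n+1)zz^* - I\bigr)$. At the base point a direct computation gives $F(Z,v) = [0:\cdots:0:-i\sin(rc):\cos(rc)]$, whose coordinate squared-sum equals $\cos(2rc)$. This vanishes exactly when $r=1$, so $F$ indeed lands in $\cp^n \setminus Q^{n-1}$ and the boundary sphere bundle collapses onto the quadric. To construct an inverse, I use that any $[z]\in\cp^n\setminus Q^{n-1}$ admits a representative, unique up to an overall sign, with $\vert z\vert = 1$ and $\sum_j z_j^2 \in \R_{>0}$; writing $z = x + iy$ with $x, y \in \R^{n+1}$, this amounts to $\langle x, y\rangle = 0$ together with $\vert x\vert > \vert y\vert$ and $\vert x\vert^2 + \vert y\vert^2 = 1$. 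Then $F^{-1}([z]) = ([x], v)$, where $v \in T_{[x]}\rp^n$ is the tangent vector determined by $y$ with magnitude $\vert v\vert = 2\vert x\vert\vert y\vert$, and the global sign ambiguity $(x,y)\mapsto(-x,-y)$ is absorbed by the $\rp^n$-quotient.

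Once these pieces are assembled, Lemma~\ref{lem1} applies directly: equivariance and the moment-map triangle hold by construction, while the $\mathrm{SO}(n+1)$-orbits in $T\rp^n$ have codimension one (they are the sphere bundles of varying radius), so any complement of the orbit distribution is one-dimensional and automatically isotropic for both $\dd\lambda$ and $F^*\omega_{FS}$. I expect the main technical obstacle to be the identification of the image as exactly the quadric complement; once the homogeneous-coordinate expression for $F(Z,v)$ is in hand, the rest is a clean real-form analogue of the complex case.
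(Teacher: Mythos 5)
Your proposal is correct and follows essentially the same route as the paper: the same ansatz via $\exp_x$ in the $j_xv$-direction (note the paper's sign is $-c(\vert v\vert)j_xv$, which is what actually yields $\sin(2rc)=+r$ at the reference point), the same moment-map matching, and the same appeal to Lemma \ref{lem1} via the codimension-one $\mathrm{SO}(n+1)$-orbits. Your bijectivity argument via normalized homogeneous coordinates with $\sum_j z_j^2\in\R_{>0}$ is a correct, slightly more explicit substitute for the paper's construction of the inverse through the antiholomorphic involution $p\mapsto p^T$ and the unique shortest geodesic from $p$ to $I(p)$.
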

\begin{proof}
    We make the following ansatz for the symplectomorphism realizing Theorem \ref{thm2},
$$
F: D_{1}\R\mathrm{P}^n\to\cp^n;\ \ (x,v)\mapsto \exp_x(-f(\vert v\vert)j_xv) = e^{f(\vert v\vert )v}xe^{-f(\vert v\vert)v},
$$
for some smooth, even function $f:(-1,1)\to \R$ depending on $r:=\vert v\vert$. This can be visualized nicely in dimension $n=1$ as shown in Figure \ref{fig4}.
\begin{figure}
	\centering
 \includegraphics[width=0.5\textwidth]{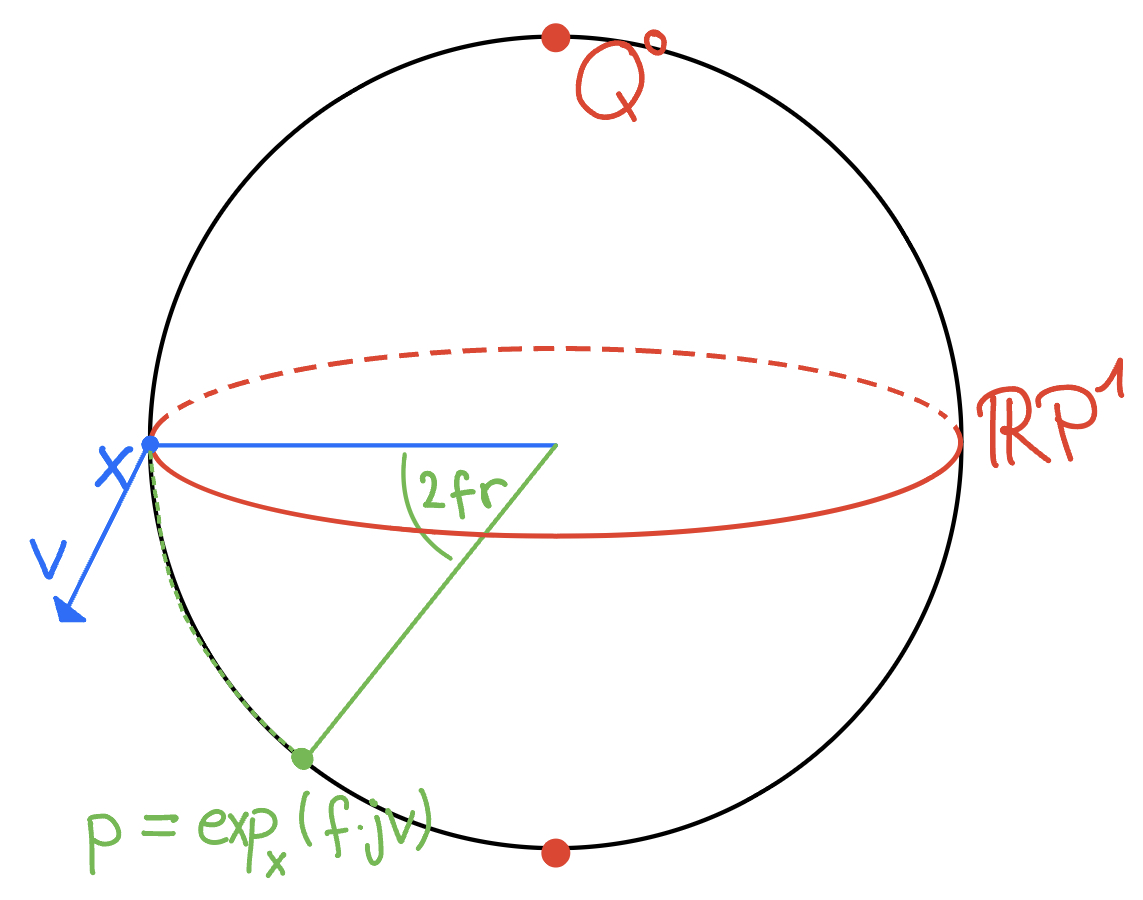}
	\caption{\textit{The figure shows a sketch of the map $F:D_{1}\rp^1\to\cp^1$. In dimension $n=1$ the quadric is given by the two antipodal points furthest away from $\rp^1$. This can for example be seen in homogenous coordinates, as $[1:\pm i]\in \cp^1$ are the only points solving the equation $z_0^2+z_1^2=0$. Now take a point $(x,v)\in T\rp^1$ (in blue) and follow the geodesic through $x$ in direction $jv$ until $p$ satisfies $jv=\mathrm{Re}(p)$ (in green). The angle between $p$ and $x$ determines $f$. In particular the image of $F$ coincides with $\cp^1\setminus Q^0$.}}
    \label{fig4}
\end{figure}
For the above choice of $(x,v)$ we find
$$
e^{f(r)v}=\begin{pmatrix}
1_{n-1} & \quad & \quad\\
\quad & \cos (fr) & i \sin (fr)\\
\quad & i\sin (fr) & \cos (fr)
\end{pmatrix}.
$$
It follows that
$$
e^{f(r)v} x e^{-f(r)v} =\frac{1}{n+1}\begin{pmatrix}
-i_{n-1} & \quad & \quad\\
\quad & i(n \sin(fr)^2-\cos (fr)^2) & -\frac{1}{2}(n+1) \sin (2fr)\\
\quad & \frac{1}{2}(n+1) \sin (2fr) & i(n \cos(fr)^2-\sin(fr)^2)
\end{pmatrix}
$$
and thus
$$
\mu_2(F(x,v))=\frac{1}{2}\begin{pmatrix}
0_{n-1} & \quad & \quad\\
\quad & 0 & - \sin (2fr)\\
\quad & \sin (2fr) & 0
\end{pmatrix}.
$$
On the other hand 
$$
\mu_1(x,v)=[x,v]=\frac{r}{2}\begin{pmatrix}
0_{n-1} & \quad & \quad\\
\quad & 0 & -1\\
\quad & 1 & 0
\end{pmatrix}.
$$
Comparing the matrix entries we find that $f$ must satisfy
\begin{equation}\label{ff}
\sin(2fr)=r,
\end{equation}
which is invertible for $r\in (-1,1)$ so that
$$
f(r)=\frac{\sin^{-1}(r)}{2r}
$$
is a smooth and even function.
\noindent
Thus it follows that $F$ is a smooth map intertwining the moment maps. But is it a bijection? We can explicitly give an inverse. A point $p\in \cp^n\setminus Q^{n-1}$ and its conjugate $I(p)$ are joined by a unique shortest geodesic $\gamma: [0, 1]\to \cp^n$. This can be seen as follows. If $p\notin \rp^n$,  $p$ and $I(p)$ lie in a unique copy of $\cp^1$. Now we look at Figure \ref{fig5} to see that there is a unique shortest geodesic joining $p$ and $I(p)$ if and only if $p\notin Q^{n-1}$. Thus
$$
F^{-1}(p)=(\gamma(1/2),-j\mathrm{Re}(p))
$$
defines the inverse.\\
\begin{figure}
	\centering
 \includegraphics[width=0.6\textwidth]{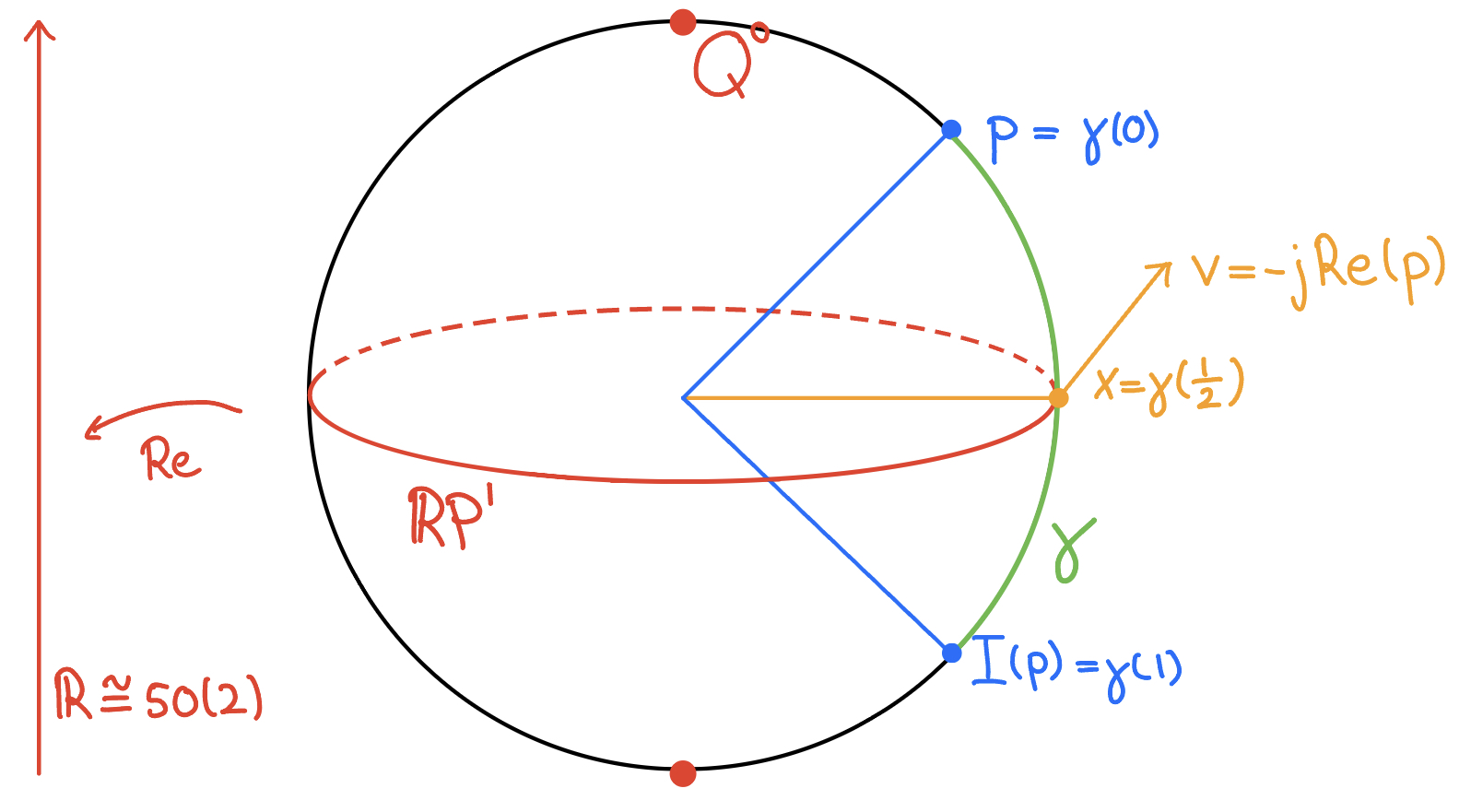}
	\caption{\textit{For any point $p\in \cp^1\setminus Q^0$ there is a unique shortest geodesic $\gamma$ (green) joining $p$ and its conjugate $I(p)$ (blue). Observe that taking the real part corresponds to projection onto the vertical axis (red). Now we set $x:=\gamma(1/2)$ and $v=-j\mathrm{Re}(p)$. The higher dimensional case reduces to this case as $\mathrm{SO}(n+1)$ acts transitively on the unit sphere bundle of $\rp^n$.}}
    \label{fig5}
\end{figure}
\ \\
\noindent
We see that $F$ is an equivariant smooth bijection that intertwines the moment maps. Again any complement of $\mathcal{D}$ is 1-dimensional and thus isotropic for any symplectic form. Thus $F$ fulfills the prerequisites of Lemma \ref{lem1} and therefore is the symplectomorphism realizing the symplectic identification in Theorem \ref{thm2}.
\end{proof}
\begin{Remark}
    For $(x,v)\in T\rp^n$ denote $p:=F(x,v)\in\cp^n$. Observe now that
    $$ H(F^{-1}(p))=l\vert v\vert=l\vert \mu_1(x,v)\vert=l\vert \mu_2(F(x,v))=l\vert \mathrm{Re}(p)\vert.$$
    We see that $H\circ F^{-1}$ extends to a smooth map on $\cp^n\setminus\rp^n$ that is critical along $Q^{n-1}$. We may thereforeindeed interpret the map $F$ as a realization of the Lerman cut with respect to $H$ at the value $l$.
\end{Remark}
\begin{Remark}
A closed almost complex manifold $(W,J)$ is called (compact) complexification of a real manifold $M$ if there exists an antiholomorphic involution
$$
I:W\to W;\ \ \ \dd I\circ J=-J\circ \dd I
$$
such that $M$ is isomorphic to the fixed point set of $I$. The symplectomorphisms constructed above realizes these complexifications of $\cp^n$ resp.\ $\rp^n$. The antiholomorphic involution on $(\cp^n\times\cp^n, j\ominus j)$ is given by
$$I: \cp^n\times\cp^n\to\cp^n\times\cp^n;\ \ (a,b)\mapsto (b,a).$$
The antiholomorphic involution on $(\cp^n, \omega_{FS})$ respresented as adjoint orbit is given by 
$$
I: \cp^n\subset\mathfrak{su}(n+1)\to \cp^n\subset\mathfrak{su}(n+1);\ \ p\mapsto p^T.
$$
Observe that in both cases the symplectomorphism is of the form
$$
DM\to W, (x,v)\mapsto \exp_x^W(-Jc(v)v),
$$
where in our cases $c$ is a scalar depending only on the norm of $v$. It seems likely that for general Kähler manifolds it is be possible to construct a similar map, maybe promoting $c$ to be operator valued and depending also on the direction of $v$ and possibly replacing the disc subbundle by a different neighborhood of the zero section. 
\end{Remark}
\section{Upper bound}\label{sec2}
To find an upper bound of the Hofer--Zehnder capacity, we will use a theorem by H. Hofer and C. Viterbo \cite[Thm.\ 1.16]{HV92} that shows existence of periodic orbits in the presence of holomorphic spheres. Let $(M,\omega)$ be a closed symplectic manifold and $J:TM\to TM$ a $\omega$-compatible almost complex structure. Consider the moduli space $\MM$ of $J$-holomorphic spheres $u: S^2\to M$ satisfying:
 \begin{align}\label{eq5}
 \begin{split}
     u(0)&\in\Sigma_0,\\
     u(\infty)&\in\Sigma_\infty,\\
     [u]&=A\\
     \frac{1}{2}\omega(A)&=\int_{\vert z\vert\leq 1}u^*\omega,
\end{split}
 \end{align}
for two closed disjoint submanifolds $\Sigma_0$ and $\Sigma_\infty$ and some homotopy class $A\in \pi_2(M)$. 
Observe that these conditions fix the reparametrization symmetry up to an $S^1$ (rotation of the complex plane). Further Hofer--Viterbo assume $A$ to be \emph{minimal}, i.e.\
$$
\omega(A)=\inf\lbrace \omega(B)\ \vert B\in [S^2, M],\ \omega(B)>0\rbrace,
$$
which ensures that bubbling cannot occur. We replace this condition by requiring $A$ to be $\omega$-\textit{indecomposable}, i.e.\ $A$ can not be written as sum $\sum_i B_i$ for free homotopy classes $B_i\in \pi_2(M)$ with $\omega(B_i)>0$. The reader my check that the proof of Theorem 1.16 in \cite{HV92} goes through without any changes also for $\omega$-indecomposable classes.\\
\ \\
If $J$ is regular the moduli space $\MM$ is a compact oriented manifold without boundary and thus defines an oriented cobordism class. Since the $S^1$ reparametrization symmetry of pseudoholomorphic curves is not fixed, the moduli space inherits a free $S^1$-action and one considers cobordisms of compact, oriented, free $S^1$-manifolds. 
\begin{Theorem}[Hofer--Viterbo, Thm.\ 1.16 \cite{HV92}]\label{thm9}
Let $(M,\omega)$ be a closed symplectic manifold, $A\in \pi_2(M)$ $\omega$-indecomposable, $J$ a smooth, regular, $\omega$-compatible almost complex structure and $\Sigma_0, \Sigma_\infty$ two disjoint nonempty closed submanifolds of $M$. Suppose $H : M \to \R$ is a Hamiltonian such that 
$$
H\vert_{\mathcal{U}(\Sigma_0)}=\min(H)\ \ \text{and}\ \ H\vert_{\mathcal{U}(\Sigma_\infty)}=\max(H),
$$
then, if the free $S^1$-cobordism class $[\mathcal{M}]\neq 0$, the Hamiltonian system $\dot x = X_H(x)$ possesses a non constant contractible $T$-periodic solution with
$$
0<T(\max (H)-\min (H))<\omega(A).
$$
In particular 
$$
c_{HZ}(M\setminus\Sigma_\infty,\Sigma_0,\omega)\leq\omega(A)
$$
and if $\Sigma_0$ is a one point set we can conclude
$$
c_{HZ}(M\setminus\Sigma_\infty,\omega)\leq\omega(A).
$$
\end{Theorem}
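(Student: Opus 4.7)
The plan is to argue by contradiction through a parametrized deformation of the pseudo-holomorphic sphere equation. Suppose no non-constant contractible periodic orbit of $X_H$ of period $T$ with $T(\max H-\min H)<\omega(A)$ exists. After a small perturbation we may assume $H$ is Morse and all critical points of $H$ lie inside $\mathcal{U}(\Sigma_0)\cup\mathcal{U}(\Sigma_\infty)$, so in particular $H$ has no periodic orbit of period $\leq T_0:=\omega(A)/(\max H-\min H)$ other than constants sitting in these neighborhoods. Introduce a smooth one-parameter family of 1-forms $\beta_R\in\Omega^1(S^2)$, $R\in[0,\infty)$, supported in an annulus separating the two marked points $0$ and $\infty$, with $\int_{S^2}\dd\beta_R=0$ and whose ``length'' (the $L^1$-norm along the $s$-direction after identifying the annulus with a cylinder $[-R,R]\times S^1$) grows linearly in $R$. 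Consider the perturbed Floer-type equation
$$
\bar\partial_J u\,=\,\beta_R\otimes X_H(u),\qquad u(0)\in\Sigma_0,\ u(\infty)\in\Sigma_\infty,\ [u]=A,
$$
together with the half-area condition in (\ref{eq5}). At $R=0$ this recovers the moduli space $\MM$ of the theorem.

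The second step is the compactness/energy analysis. A standard computation gives the energy identity
$$
E(u)\,=\,\omega(A)\,-\,\int_{S^2}\beta_R\wedge\dd H(u)
$$
so that $E(u)\leq\omega(A)$ uniformly in $R$, with equality only for genuine $J$-holomorphic spheres. A Gromov-type limit of a sequence of solutions with $R_k\to\infty$ produces either (i) a bubble tree of $J$-holomorphic spheres, whose bubble components have classes $B_i\in\pi_2(M)$ with $\omega(B_i)>0$ summing into $A$, contradicting $\omega$-indecomposability of $A$; or (ii) a broken Floer cylinder whose breaking trajectories converge at their ends to non-constant contractible periodic orbits $\gamma$ of $X_H$, with the action identity forcing
$$
T_\gamma(\max H-\min H)\,<\,\omega(A),
$$
contradicting the standing assumption. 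Hence the parametrized moduli space $\bigsqcup_R\MM_R$ is compact in $R\in[0,\infty)$, and therefore the free $S^1$-cobordism class $[\MM_R]$ is independent of $R$ and equal to the assumed nonzero class $[\MM]$.

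The third step is to reach a contradiction for large $R$. With $\beta_R$ of sufficiently large length, standard action arguments (or a direct integration-by-parts using the fact that the image of a solution must traverse from $\mathcal{U}(\Sigma_0)$ to $\mathcal{U}(\Sigma_\infty)$ while $\int\beta_R\wedge\dd H(u)$ can be made arbitrarily negative on any would-be solution) exclude the existence of any solution, so $\MM_R=\emptyset$. This contradicts $[\MM_R]=[\MM]\neq 0$, proving existence of the periodic orbit. The final bound $c_{HZ}(M\setminus\Sigma_\infty,\Sigma_0,\omega)\leq\omega(A)$ follows because any admissible $H$ with $\max H>\omega(A)$ would, after rescaling so that $H$ vanishes near $\Sigma_0$ and attains its maximum near $\Sigma_\infty$, yield a non-constant periodic orbit of period $<1$ by the theorem, contradicting admissibility.

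The main obstacle is the compactness step: one must verify that under the weaker hypothesis of $\omega$-indecomposability (in place of minimality as in \cite{HV92}) the only possible bubbles carry positive $\omega$-area and decompose the class $A$ nontrivially, so that indecomposability alone suffices to rule them out. Standard transversality for $J$ handles multiply-covered bubbles provided $A$ itself is primitive enough in its decomposition; the careful bookkeeping of how the marked point and half-area conditions distribute across the bubble tree is where the ``no changes'' claim after the theorem statement really needs to be checked, and this is precisely the content of the remark preceding the theorem.
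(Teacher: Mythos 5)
The paper does not actually prove this statement: it is imported from Hofer--Viterbo \cite{HV92} as Theorem 1.16, and the only original content is the preceding remark that the hypothesis ``$\omega(A)$ minimal'' can be relaxed to ``$A$ $\omega$-indecomposable'' with no change to the argument. Your sketch is a reasonable reconstruction of the strategy of the cited proof (deform the holomorphic-sphere equation by a Hamiltonian perturbation of growing length, use cobordism invariance of $[\MM_R]$, kill solutions for large $R$ by an energy estimate, and locate the failure of compactness in either bubbling or breaking), so in spirit it follows exactly the route the paper points to.

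As a proof, however, it has a genuine gap, and it sits precisely at the one point that distinguishes the statement from a bare citation. Ruling out bubbling under $\omega$-indecomposability is not mere bookkeeping: in a Gromov limit the class splits as $A=A_0+\sum_i B_i$, where the bubbles $B_i$ are honest $J$-holomorphic spheres with $\omega(B_i)>0$ but the principal component represents $A_0$ and solves the \emph{perturbed} equation, so its energy identity only gives $\omega(A_0)\geq \int \beta_R\wedge \dd(H\circ u_0)$, which need not be positive; indecomposability therefore does not apply directly, and one must invoke the half-area normalization in \eqref{eq5} and the marked-point constraints to exclude the configuration in which a single bubble absorbs all of $\omega(A)$ while the principal part degenerates. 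You flag this yourself and defer it, which means the proposal establishes nothing beyond what the citation already asserts. Two further wobbles: perturbing $H$ to be Morse destroys the hypothesis that $H$ is constant near $\Sigma_0$ and $\Sigma_\infty$, which is what controls solutions near the marked points and guarantees the limiting orbit is non-constant; and your step 3 is internally inconsistent with your own energy identity --- given $E(u)=\omega(A)-\int\beta_R\wedge\dd H(u)$ and $E(u)\geq 0$, nonexistence for large $R$ requires showing the integral exceeds $\omega(A)$ (large and positive), not that it ``can be made arbitrarily negative''.
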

\noindent
We can now use this theorem and the compactifications constructed in the previous section to determine an upper bound of the Hofer--Zehnder capacity of $(D_1\cp^n,\dd\lambda)$ resp. $(D_1\rp^n,\dd\lambda)$.\\
\ \\
For the case $(M,\omega)=\overline{(D_2\cp^n,\dd\lambda)}\cong(\cp^n\times\cp^n,\omega\ominus\omega)$ we pick $\Sigma_0=\lbrace (p,q)\rbrace $ a one point set and $\Sigma_\infty$ the anti-diagonal divisor $\bar{\Delta}$. We fix $A\in [S^2,\cp^n\times \cp^n]$ to be the class of a degree one curve in the first factor. Further by \cite[Prop.\ 7.4.3]{DS17} the split complex structure $j\ominus j$ is regular. Now every holomorphic sphere through $(p,q)$ in class $A$ is of degree 1 in the first and constant in the second factor. Further $\bar\Delta$ restricted to the fiber over $q$ is a copy of $\cp^{n-1}$, in particular any $j\ominus j$-holomorphic sphere through $(p,q)$ in class $A$ intersects $\bar\Delta$. This means the moduli space of holomorphic curves $\mathcal{M}$ through $(p,q)$ in class $A$ can be identified with the space of complex lines through $p$. Recall that we left the $S^1$-reparametrization symmetry, therefore $\mathcal{M}\cong S^{2n-1}$ and not $\mathcal{M}\cong \cp^{n-1}$ and the free circle action given by rotating the holomorphic spheres parameterizes the Hopf fibration. 
\begin{Lemma}
    The oriented $S^1$-cobordism class of $S^{2n-1}$ does not vanish.
\end{Lemma}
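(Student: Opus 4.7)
The plan is to produce a free $S^1$-cobordism invariant of compact oriented $(2n-1)$-manifolds by integrating a power of the first Chern class of the quotient bundle, verify it is a cobordism invariant by Stokes' theorem, and then compute it to be nonzero on the Hopf sphere.

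More precisely, to any closed, oriented $(2n-1)$-manifold $M$ equipped with a free smooth $S^1$-action I would associate the quotient $\pi_M : M \to \bar M := M/S^1$, which is a principal $S^1$-bundle over a closed, oriented $(2n-2)$-manifold, together with the real number
$$
I(M) \; := \; \int_{\bar M} c_1(\pi_M)^{n-1}.
$$
The first step is to check that $I$ vanishes on any free $S^1$-null-cobordant manifold. If $W^{2n}$ is a compact, oriented free $S^1$-manifold with $\partial W = M$ equivariantly, then $\bar W := W/S^1$ is a compact, oriented $(2n-1)$-manifold whose boundary is $\bar M$, and the bundle $\pi_M$ extends to the principal $S^1$-bundle $W \to \bar W$. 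Choosing a connection and representing its first Chern class by a closed $2$-form $\alpha$ on $\bar W$, the form $\alpha^{n-1}$ is closed on $\bar W$ and restricts to a representative of $c_1(\pi_M)^{n-1}$ on $\bar M$, so Stokes' theorem gives
$$
I(M) \; = \; \int_{\partial \bar W} \alpha^{n-1} \; = \; \int_{\bar W} d(\alpha^{n-1}) \; = \; 0.
$$

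The second step is to evaluate $I$ on the Hopf fibration $S^{2n-1} \to \cp^{n-1}$, which, by the identification $\mathcal{M}\cong S^{2n-1}$ recorded in the paragraph preceding the lemma, is precisely the principal $S^1$-bundle structure on $\mathcal{M}$. For this bundle $c_1$ is (up to a sign) the standard generator of $H^2(\cp^{n-1};\Z)$, so $c_1^{n-1}$ pairs to $\pm 1$ with the fundamental class of $\cp^{n-1}$. Consequently $I(S^{2n-1}) = \pm 1 \neq 0$, which forces the free $S^1$-cobordism class $[S^{2n-1}]$ to be nontrivial.

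I do not anticipate real difficulty here; the only bookkeeping worth being careful about is that orientations behave correctly, so that the quotient of a compatibly oriented free $S^1$-cobordism is itself oriented as a manifold with boundary and the boundary orientations match. This is standard, and in any case the sign ambiguity in $I(S^{2n-1})$ plays no role in the nonvanishing conclusion.
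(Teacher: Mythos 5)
Your argument is correct, but it takes a genuinely different route from the paper's. The paper argues by contradiction: if $S^{2n-1}$ with the Hopf action bounded a compact oriented free $S^1$-manifold $W$, one could glue the unit ball $B\subset\C^n$ (with the action by scalar multiplication) into $W$ along the common boundary to obtain a closed oriented manifold carrying a circle action with exactly one fixed point, and the nonexistence of such manifolds is then quoted from the literature. You instead build the characteristic number $I(M)=\int_{M/S^1}c_1(\pi_M)^{n-1}$ of the quotient principal bundle, verify via Chern--Weil and Stokes that it vanishes on free $S^1$-boundaries, and evaluate it to $\pm 1$ on the Hopf fibration $S^{2n-1}\to\cp^{n-1}$ (which is indeed the action on $\mathcal{M}$, as the paper notes). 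Both proofs are valid; yours is self-contained and produces an explicit integer invariant detecting the class --- essentially the statement that the Hopf bundle generates a free summand of $\Omega^{SO}_{2n-2}(BS^1)$ --- whereas the paper's gluing argument is shorter on the page at the cost of outsourcing the key nonexistence statement. The orientation bookkeeping you flag is the only point requiring care, and it does work out: a free action of the connected group $S^1$ on an oriented $W$ induces a compatible orientation on $W/S^1$ with $\partial(W/S^1)=(\partial W)/S^1$, so the Stokes step is legitimate.
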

\begin{proof}
    We need to show that there is no oriented free $S^1$-manifold with boundary $S^{2n-1}$. Assume there was, we could then glue in the standard ball $B\subset \C^n$ of radius $1$ to obtain a closed orientable manifold with a circle action with exactly one fixed point. This is because multiplication with $e^{it}$ yields such an action on $B\subset \C^n$. Now such a manifold does not exist as shown in \cite[Lem.\ 2.1]{20}.
\end{proof}
\noindent
We can use Theorem \ref{thm9} to find
$$
c_{HZ}(D_2\cp^n,\dd\lambda)=c_{HZ}(\cp^n\times\cp^n\setminus\bar\Delta,\omega\ominus\omega)\leq \omega(A)=4\pi=2l.
$$
It follows that 
\begin{equation}\label{e1}
c_{HZ}(D_1\cp^n,\dd\lambda)\leq l.
\end{equation}
\ \\
\noindent
For the case $(D_1\rp^n,\dd\lambda)$ we need to work less as Hofer--Viterbo already computed 
\begin{equation}\label{e2}
c_{HZ}(D_{1}\rp^n,\dd\lambda)\leq c_{HZ}(\cp^n,\omega_{FS})=4\pi=2l
\end{equation}
using Theorem \ref{thm9}. In the next section we will see that the bounds \eqref{e1} and \eqref{e2} are actually good enough. 
\section{Lower bounds}\label{sec3}
Finding a lower bound is in some sense more elementary and much more hands on. Indeed we will simply construct explicit admissible Hamiltonians. We observed already that the Hamiltonian $H(x,v)=l\vert v\vert$ generates a Hamiltonian circle action away from the zero-section. Composing $H$ with a smooth even function $f:[-l,l]\to \R$ that satisfies $0\leq f'< 1$, $f\equiv 0$ near $0$,  $f\equiv l-\varepsilon$ near $\pm l$ for an arbitrary small $\varepsilon>0$, yields an admissible function $\Tilde{H}:=f\circ H$ that extends smoothly to the zero-section. Further the oscillation of $\Tilde{H}$ is $l-\varepsilon$ so that in the limit $\varepsilon\to 0$ we obtain $l$ as lower bound. This finishes the computation of $$c_{HZ}(D_1\cp^n,\dd\lambda)=l$$ as the lower bound agrees with the upper bound derived in \eqref{e1}.\\
\ \\
\noindent
However in the case of $\rp^n$ this lower bound is not good enough. To improve it we look at the dynamics for a family of Hamiltonians 
    $$
    H_\varepsilon(x,v)=\sqrt{\Vert v\Vert_x^2+V_\varepsilon(x)}
    $$
    with potentials $V_\varepsilon: \rp^n\to\R$ that approximate billiard dynamics on a hemisphere as $\varepsilon\to 0$. We may lift the potential to $S^n$ and assume that $V_\varepsilon(-x)=V_\varepsilon(x)$ so that the potential descends to a function $V_\varepsilon$ on $\rp^n$. 
    We fix a point $N\in S^n$. The isotropy subgroup of $SO(n+1)$ that fixes $N$ is isomorphic to $SO(n)$ and leaves the equatorial copy of $S^{n-1}$ invariant. 
    We want the family $V_\varepsilon$ to be rotation invariant i.e.\ $V_\varepsilon(R(x))=V_\varepsilon(x)$ for all $R\in \mathrm{SO(n)}$. This implies that $V_\varepsilon(x)$ only depends on the distance of $x$ from $N$. In particular the gradient $\nabla V_\varepsilon (x)$ points along the geodesic connecting $x$ and $N$. 
    \begin{Lemma}\label{lem2}
           The Hamiltonian vector field for $H_\varepsilon$ with respect to $\dd\lambda$ is given by
            $$
            (X_{H_\varepsilon})_{(x,v)}=\frac{1}{H_\varepsilon} \left( X_{(x,v)}-\frac{1}{2}(\nabla V_\varepsilon(x))^\VV\right),
            $$
             where $X=X_E$ is the vector field generating geodesic flow and $(\cdot)^\VV:TS^n\to TTS^n$ denotes the vertical lift. 
    \end{Lemma}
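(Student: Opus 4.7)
The plan is to reduce the computation to two already-understood building blocks: the geodesic spray $X_E$ and the Hamiltonian vector field of a pulled-back potential. To avoid dealing with the square root directly I would first square the Hamiltonian, observing that
$$
H_\varepsilon^2 = 2E + \pi^*V_\varepsilon.
$$
Differentiating gives $2H_\varepsilon\,\dd H_\varepsilon = 2\,\dd E + \pi^*\dd V_\varepsilon$, and since $H_\varepsilon > 0$ on the region of interest I can divide by $H_\varepsilon$ and use linearity of the assignment $f \mapsto X_f$ (a consequence of non-degeneracy of $\dd\lambda$) to obtain
$$
X_{H_\varepsilon} = \frac{1}{H_\varepsilon}\left(X_E + \tfrac{1}{2}\,X_{\pi^*V_\varepsilon}\right).
$$

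It therefore suffices to identify $X_{\pi^*V_\varepsilon}$ with $-(\nabla V_\varepsilon)^\VV$. This is an instance of the general identity $X_{\pi^*f} = -(\nabla f)^\VV$ valid for any $f\in C^\infty(N)$, which I would verify by a short local-coordinate check. Using canonical coordinates on $T^*N$ with $\lambda_{\mathrm{can}} = p_i\,\dd q^i$ and the ansatz $X = A^i\partial_{q^i} + B_i\partial_{p_i}$, the defining relation $\iota_X\,\dd\lambda_{\mathrm{can}} = -\pi^*\dd f$ forces $A^i = 0$ and $B_i = -\partial_i f$. Transporting back to $TN$ through the metric isomorphism $p_i = g_{ij}v^j$ converts $\partial_{p_i}$ into $g^{ij}\partial_{v^j}$, so the resulting vertical field reads $-g^{ij}(\partial_j f)\partial_{v^i} = -(\nabla f)^\VV$.

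Plugging $f = V_\varepsilon$ back into the formula derived in the first paragraph then yields exactly the stated expression. There is no genuine analytic difficulty; the only thing to watch is the sign convention $\dd H = -\iota_{X_H}\omega$ used in the paper together with the normalization of the vertical lift $(\cdot)^\VV$. A completely coordinate-free variant would work equally well: since $\pi^*\dd f$ annihilates all vertical vectors and the pulled-back Liouville form satisfies $\dd\lambda(W^\VV, Y^\HH) = -g(W,Y)$, one sees directly that $X_{\pi^*f}$ must be vertical and can only be $-(\nabla f)^\VV$.
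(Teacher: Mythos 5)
Your argument is correct and, at its core, is the same computation as the paper's: everything reduces to knowing how $\dd\lambda$ pairs vertical lifts with the rest of $T(TN)$. The paper verifies the stated formula in one pass, computing intrinsically that $\dd\lambda(A^\VV,\cdot)=g(A,\dd\pi\,\cdot)$ (via Dombrowski's bracket identities for lifts) and then checking $\iota_{X_{H_\varepsilon}}\dd\lambda=-\dd H_\varepsilon$ directly; you instead split $\dd H_\varepsilon=\frac{1}{H_\varepsilon}\left(\dd E+\tfrac12\,\pi^*\dd V_\varepsilon\right)$, invoke pointwise $C^\infty$-linearity of the musical isomorphism induced by $\dd\lambda$, and identify $X_{\pi^*f}=-(\nabla f)^\VV$ in canonical coordinates on $T^*N$ transported through the metric. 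Both routes are valid and of comparable length; yours is the more standard textbook decomposition, the paper's avoids coordinates entirely. One genuine slip: in your closing ``coordinate-free variant'' you assert $\dd\lambda(W^\VV,Y^\HH)=-g(W,Y)$, but the correct sign (as the paper computes, and as your own coordinate calculation $\dd\lambda(\partial_{p_i},\partial_{q^j})=\delta_{ij}$ confirms) is $\dd\lambda(W^\VV,Y^\HH)=+g(W,Y)$; with your stated sign the variant would yield $X_{\pi^*f}=+(\nabla f)^\VV$ and contradict your main derivation. Since that variant is only an aside, the proof stands, but the sign should be corrected.
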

    \begin{proof}
        The kernel of the differential of the footpoint projection $\pi:TS^n\to S^n$ defines the so called vertical distribution $\VV\subset TTS^n$. Now the Levi-Civita connection determines a complement $\HH\subset TTS^n$ called the horizontal distribution. For any $(x,v)\in TS^n$ we can identify $T_xS^n$ with $\HH_{(x,v)}$ and $\VV_{(x,v)}$ using the so called horizontal and vertical lift respectively. The horizontal lift is defined by
        $$
           T_xS^n\to\HH_{(x,v)};\ \ w\mapsto w^\mathcal{H}:=\diff (\gamma(t),P_\gamma v)
        $$
        where $\gamma:(-\varepsilon,\varepsilon)\subset\R\to M$ is a smooth curve satisfying $\gamma(0)=x$ and $\dot\gamma(0)=w$ and $P_\gamma$ denotes parallel transport along $\gamma$. The vertical lift is defined by
        $$
        T_x S^n\to\VV_{(x,v)}; \ \ w\mapsto w^\mathcal{V}:=\diff (x, v+tw).
        $$
        These lifts can be used to lift any vector field on $S^n$ to a vector field on $TS^n$. Observe that per definition $\lambda$ vanishes on $\VV$, further as proven in \cite[Lemma 2]{D62}, $[A^\VV,B^\HH]\subset \VV$ and $[A^\VV,B^\VV]=0$ for all vector fields $A,B$ on $S^n$. We conclude that 
        \begin{align*}
            \dd\lambda(A^\VV,B^\HH)&=A^\VV(\lambda(B^\HH))-B^\HH(\lambda(A^\VV))-\lambda([A^\VV, B^\HH])\\
            &=A^\VV(\lambda(B^\HH))=A^\VV(g(v,B))=g(A,B)
        \end{align*}
        and 
        $$
        \dd\lambda(A^\VV,B^\VV)=A^\VV(\lambda(B^\VV))-B^\VV(\lambda(A^\VV))-\lambda([A^\VV, B^\VV])=0.
        $$
        Thus $\dd\lambda(A^\VV, \cdot )=g(A,\dd\pi\cdot)$. We use this to verify the formula for $X_{H_\varepsilon}$,
    \begin{align*}
        \dd\lambda(X_{H_\varepsilon},\cdot)&=\frac{1}{H_{\varepsilon}}\left(\dd\lambda(X,\cdot)-\frac{1}{2}\dd\lambda((\nabla V_\varepsilon)^\VV,\cdot)\right)\\ 
        &=-\frac{1}{H_{\varepsilon}}\left(\dd E+\frac{1}{2}g(\nabla V_\varepsilon,\dd\pi \cdot)\right)
        =-\frac{1}{2H_{\varepsilon}}\left(2\dd E+\dd V_\varepsilon\right)=-\dd H_\varepsilon.
    \end{align*}       
    \end{proof}
    \noindent
    The Lemma shows in particular that $(X_{H_\varepsilon})_{(x,v)}\in TTS^2$ for the totally geodesically embedded copy of $S^2$ through $x$ and $N$ tangent to $v$ and therefore its flow also preserves this copy of $TS^2$. We reduced the problem to the situation in dimension $n=2$. Now we introduce coordinates on this copy of $S^2$ to describe the dynamics explicitly.
    We use $\theta\in [0,\pi]$ and $\varphi\in \R/ 2\pi \Z$. Where $N$ corresponds to $\theta=0$. Set
    $$
    V_\varepsilon(x)=\left\{%
\begin{array}{ll}
    1, & \hbox{for}\ \theta(x)\in [\pi/2-\varepsilon/2,\pi/2] \\
    0, & \hbox{for}\ \theta(x)\in [0,\pi/2-\varepsilon]\\
\end{array}%
\right.
    $$
and interpolate smoothly for $\theta(x)\in [\pi/2-\varepsilon,\pi/2-\varepsilon/2]$ so that $V_\varepsilon\geq 0$ and $V_\varepsilon'\geq 0$. Now extend $V_\epsilon$ according to the required symmetries, i.e. $V_\varepsilon(x)=V_\varepsilon(-x)$ and $V_\varepsilon(Rx)=V_\varepsilon(x)$ for all $R\in \mathrm{SO}(n)$. 
In this coordinates the Hamiltonian reads
$$
H_\varepsilon(x,v)=\sqrt{\dot\theta^2+\sin(\theta)^2\dot\varphi^2+V_\epsilon(\theta)},
$$
where by abuse of notation we write $\dot \varphi, \dot \theta$ instead of $v_\varphi, v_\theta$ and $V_\varepsilon(\theta)$ instead of $V_\epsilon(x)$ as the potential only depends on $\theta$.
Observe that $H_\varepsilon$ does not depend on $\varphi$, this implies immediately that $\sin(\theta)^2 \dot\varphi$ is preserved. We conclude that the maximal angular velocity $\dot\varphi_{\max}$ corresponds to the minimal radius $\theta_{\min}$. We want to find upper bounds for the periods of all orbits for the Hamiltonian $H_\varepsilon$ as $\varepsilon$ tends to zero. Therefore we take an arbitrary orbit $\gamma(t)=(\theta(t), \varphi(t),\dot\theta(t),\dot\varphi(t))$ with energy $H_\varepsilon(\gamma)< 1$. We split into two cases: $\theta_{\min}\geq \frac{\pi}{2}-\sqrt{\varepsilon}$ and $\theta_{\min}\leq \frac{\pi}{2}-\sqrt{\varepsilon}$. \\
\ \\
\noindent
\textbf{Case $\theta_{\min}\geq \frac{\pi}{2}-\sqrt{\varepsilon}$\ :}
At $\theta(t)=\theta_{\min}$ we have $\dot\theta=0$, thus 
$$
1> H_\varepsilon=\sqrt{\sin(\theta_{\min})^2\dot\varphi_{\max}^2+V_\epsilon(\theta_{\min})}\geq \sin(\theta_{\min})\dot\varphi_{\max}\Rightarrow \dot\varphi_{\max}\leq\frac{1}{\sin(\theta_{\min})}.
$$
Clearly the period $T$ must satisfy
$$
T> \frac{2\pi}{\dot\varphi_{\max}}\geq 2\pi\sin(\theta_{\min})\geq 2\pi \cos(\sqrt{\varepsilon})\xlongrightarrow{\varepsilon\longrightarrow 0} 2\pi.
$$
\noindent
\textbf{Case $\theta_{\min}\leq \frac{\pi}{2}-\sqrt{\varepsilon}$\ :}
For small $\varepsilon$ certainly $\varepsilon<\sqrt{\varepsilon}$, so $\theta_{\min}\leq \frac{\pi}{2}-\sqrt{\varepsilon}$ implies $V(\theta_{\min})=0$. In particular segments of the orbit are geodesics parametrized by unit speed. There are at least two of these segments as by symmetry of the potential the orbit must leave the region where $V_\varepsilon\neq 0$ with the same angle it enters the region. The  length of these segments is $\pi-2 l_\varepsilon$, where $l_\varepsilon$ is the length of the geodesic segment contained in the region $\pi/2-\varepsilon\leq\theta\leq \pi/2$ where the potential $V_\varepsilon$ does not vanish as depicted in Figure \ref{fig7}. Now we can bound the period from below as follows
$$
T> 2\pi-4l_\varepsilon\geq 2\pi(1-\sin (l_\varepsilon))=2\pi\left(1-\frac{\sin(\varepsilon)}{\sin(\pi/2-\theta_{\min})}\right)\geq 2\pi\left(1-\frac{\sin(\varepsilon)}{\sin(\sqrt{\varepsilon})}\right)\xlongrightarrow{\varepsilon\longrightarrow 0} 2\pi.
$$
The bound on $\sin(l_\varepsilon)$ is obtained using the spherical law of sines 
$$
\frac{\sin(l_\varepsilon)}{\sin(\pi/2)}=\frac{\sin(\varepsilon)}{\sin(\pi/2-\theta_{\min})}
$$
as explained in Figure \ref{fig7}.\\

\begin{figure}
	\centering
 \includegraphics[width=1\textwidth]{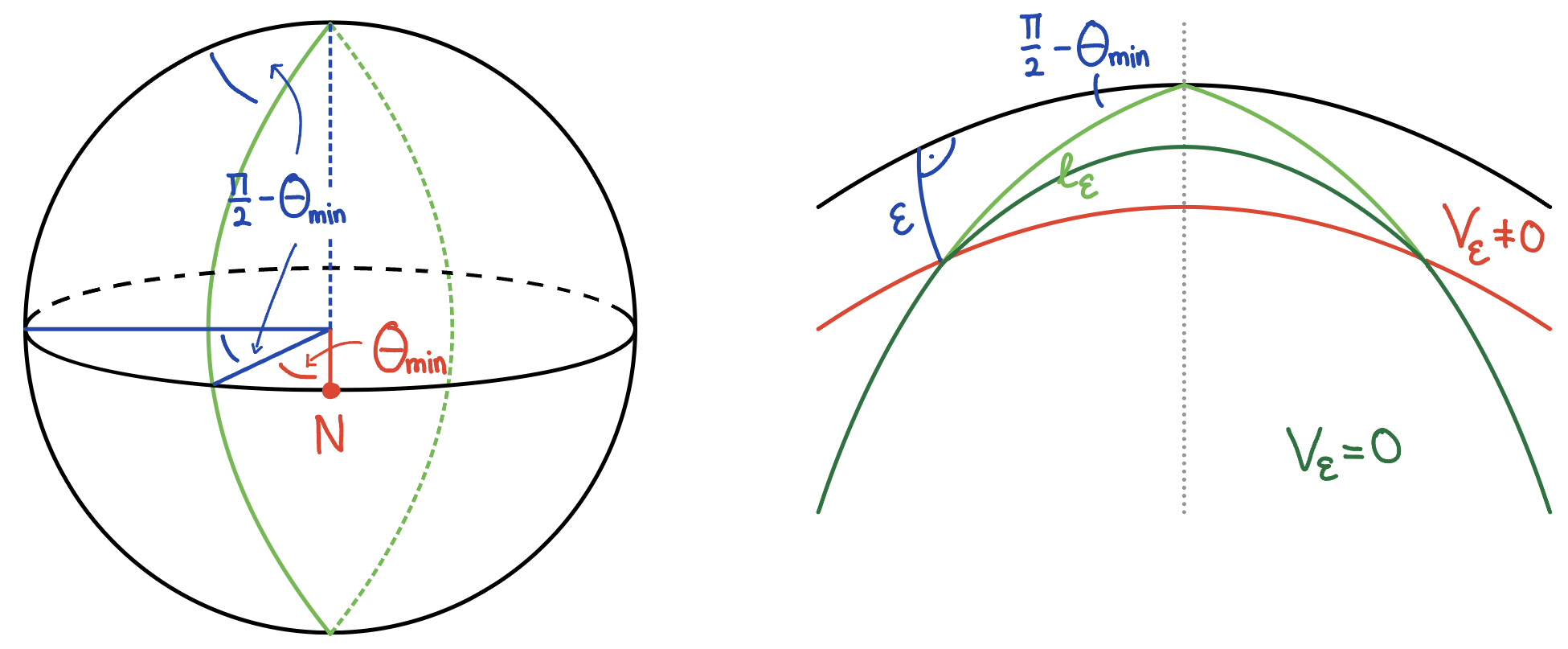}
	\caption{\textit{On the left we see (in green undashed) a geodesic segment on the hemisphere centered at $N$ (in red). Indeed it hits the equator with angle $\frac{\pi}{2}-\theta_{\min}$ (in blue). On the right we see (in dark green) a sketch of how the trajectory might continue when it enters the region where the potential does not vanish. By conservation of angular momentum the picture must be symmetric with respect to the grey dotted line. In particular the trajectory leaves the region with the same angle entering it. Further we see that the segment of the geodesic $l_\varepsilon$ is the hypotenuse of a right spherical triangle. The length of the side opposite to $\frac{\pi}{2}-\theta_{\min}$ is $\varepsilon$ and thus we can determine $l_\varepsilon$ using the spherical law of sines.}}
    \label{fig7}
\end{figure}

\ \\
\noindent
In total we conclude that for small enough $\varepsilon>0$ the period is bounded by
$$
T> 2\pi (1-\sqrt{\varepsilon})
$$
and therefore the orbits for the scaled Hamiltonian $(1-\sqrt{\varepsilon})2\pi H_\varepsilon$ have periods $T\geq 1$. Now we can, analogously to the modifications for $H(x,v)=l\vert v\vert$, use a function $f:[-1,1]\to [0,\infty)$ changing the oscillation arbitrary small so that $f\circ H_\varepsilon$ is admissible. In particular 
$$ c_{HZ}(D_1\rp^n,\dd\lambda)\geq (1-\sqrt{\varepsilon})2\pi\mathrm{osc}(H_\varepsilon)=(1-\sqrt{\varepsilon})2\pi\ \ \forall\varepsilon>0, $$
which finishes the proof of Theorem \ref{thma} as $2\pi=2l$ for the normalization\footnote{As $2\pi$ is the length of a prime geodesic on $S^n$, $\pi$ is the length of a prime geodesic on $\rp^n$.} we worked with and this lower bound agrees with the upper bound derived in \eqref{e2}.
\section{A magnetic version} \label{sec4}
In this section we consider the tangent bundle of $\cp^n$ with the twisted symplectic form $\dd\lambda-s\pi^*\omega_{FS}$, where $\omega_{FS}\in \Omega^2(\cp^n)$ denotes the Fubini-Study form while $s\in \R$ is a parameter representing the strength of the magnetic twist. We will compute the Hofer--Zehnder capacity for the unit disc subbundle using the exact same strategy as in the proof of Theorem \ref{thma}. Indeed the magnetic version is a generalization as the case $s=0$ recovers the untwisted case.
\subsection{Compactification}
Observe that $\omega_{FS}$ is invariant under the $\mathrm{SU}(n+1)$-action and therefore the induced action of $\mathrm{SU}(n+1)$ on $(T\cp^n,\dd\lambda-s\pi^*\omega_{FS})$ remains symplectic. Indeed it is not too hard to check that 
\begin{equation}
    \mu_1: T\cp^n\to\mathfrak{su}(n+1);\ (x,v)\mapsto [x,v]-sx
\end{equation}
is a moment map. The term $-sx$ compensates the twist as the inclusion of $\cp^n\hookrightarrow \mathfrak{su}(n+1)$ as adjoint orbit is a moment map for $\omega_{FS}$. We can use the same strategy as before to show that the symplectomorphism from Theorem \ref{thm1} can be adapted to the twisted situation.
\begin{Theorem}\label{thm3}
There is a symplectomorphism, which is equivariant with respect to the Hamiltonian $\mathrm{SU}(n+1)$ actions, 
$$
F: (D_1 \cp^n,\dd\lambda-s\pi^*\omega_{FS})\to (\cp^n\times \cp^n\setminus \bar\Delta, R_1\omega_{FS}\ominus R_2\omega_{FS}),
$$
where $\bar\Delta\subset \cp^n\times \cp^n$ denotes the anti-diagonal divisor
$$
\bar\Delta=\lbrace (p,q)\in\cp^n\times\cp^n\ \vert \ \mathrm{dist}(p,q) \ \text{maximal}\rbrace
$$
and 
$$
R_1=\frac{1}{2}\left (\sqrt{s^2+1}+s\right),\ \ R_2=\frac{1}{2}\left (\sqrt{s^2+1}-s\right).
$$
\end{Theorem}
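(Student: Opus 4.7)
The plan is to adapt the equivariant ansatz from the proof of Theorem~\ref{thm1}, now allowing two distinct exponential offsets so as to accommodate the magnetic twist. I would take
$$
F(x, v) = \bigl(\exp_x(-c_1(r)\, j_x v),\ \exp_x(c_2(r)\, j_x v)\bigr),
$$
where $r = |v|$ and $c_1, c_2 : [0, 1) \to \mathbb{R}$ are smooth functions of $r$ (depending also on $s$) to be determined. By construction $F$ is $\mathrm{SU}(n+1)$-equivariant, and the case $s = 0$ with $c_1 = c_2 = c$ recovers Theorem~\ref{thm1} (after rescaling $\rho$).

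The twisted symplectic form $\dd\lambda - s\pi^*\omega_{FS}$ shifts the source moment map to $\mu_1(x, v) = [x, v] - sx$, the correction $-sx$ being the moment map of $\omega_{FS}$ on the base; on the target, the diagonal action has moment map $\mu_2(a, b) = R_1 a - R_2 b$. To fix $c_1, c_2$, I would impose $\mu_1 = \mu_2 \circ F$, reduce by equivariance to a canonical pair $(Z, v_0)$, and decompose the geodesic as $\gamma(t) = V(t) + y$ as in the proof of Theorem~\ref{thm1}. The moment-map equation then reduces to a system of two trigonometric equations in $c_1, c_2$; the algebraic identities $R_1 - R_2 = s$, $R_1 + R_2 = \sqrt{s^2+1}$, and $R_1 R_2 = \tfrac14$ are precisely what should make the system solvable in closed form, producing smooth solutions $c_1(r), c_2(r)$ on $[0, 1)$.

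For bijectivity, I would build the inverse by the same geometric recipe used in Theorem~\ref{thm1}: given $(a, b) \in \cp^n \times \cp^n \setminus \bar\Delta$, the value $\mu_2(a, b)$ recovers $[x, v] - sx$ and hence both $v$ and the direction $j_x v$, from which $x$ is then read off as the unique basepoint for which the forward ansatz produces $(a, b)$. I would then apply Lemma~\ref{lem1}: since $\mathrm{SU}(n+1)$ acts on $D_1\cp^n$ with cohomogeneity one, any complement of the orbit distribution $\mathcal{D}$ is one-dimensional and hence automatically isotropic for both symplectic forms; together with the commuting moment map triangle this gives $F^*(R_1\omega_{FS} \ominus R_2\omega_{FS}) = \dd\lambda - s\pi^*\omega_{FS}$.

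The main obstacle will be verifying the smoothness of $c_1(r), c_2(r)$ and of $F$ itself up to and including the zero section $r = 0$. For $s \neq 0$ the two functions remain distinct at $r = 0$, so their asymmetry must interact compatibly with the degeneration of the direction $j_x v / r$ as $r \to 0$; this is where the specific normalization $R_1 R_2 = \tfrac14$ is expected to play its role, ensuring both that the range of $r$ extends exactly up to the unit disc and that $F$ extends smoothly across the zero section in an equivariant way.
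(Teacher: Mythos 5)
Your proposal follows essentially the same route as the paper: the two-function ansatz $F(x,v)=(\exp_x(-c_1 j_xv),\exp_x(c_2 j_xv))$, the shifted moment map $[x,v]-sx$, reduction by equivariance to a trigonometric system for $c_1,c_2$ governed by $R_1-R_2=s$ and $R_1R_2=\tfrac14$, the geometric inverse, and Lemma~\ref{lem1} via the cohomogeneity-one observation. The one step you flag but leave open --- smoothness of $c_1,c_2$ (as even functions) at $r=0$ --- is exactly where the paper does extra work, rewriting the system in terms of $\sin(x)/x$ and $(1-\cos(x))/x^2$ so that the implicit function theorem applies at $r=0$ with nonvanishing determinant $-8R_1R_2(c_1(0)+c_2(0))=-4$; note also that the solution is only implicit, not in closed form as in the untwisted case.
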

\begin{proof}
Compared to the proof of Theorem \ref{thm1} we modify the Ansatz a little by considering
$$ F: D_1 \cp^n \to \cp^n\times \cp^n\setminus\bar\Delta;\ \ \ (x,v)\mapsto \left(\exp_x(-c_1(r)j_xv),\exp_x(c_2(r)j_xv)\right)
$$
for two different functions $c_1, c_2$ that will be determined imposing the relation 
$$
\mu_1(x,v)=\mu_2(F(x,v)).
$$
Per construction the moment maps and the map $F$ are equivariant. Hence it is again enough to check the moment map triangle at some point
$$
x=Z=\frac{1}{n+1}\begin{pmatrix}
-i_{n-1} & \quad & \quad\\
\quad & -i & 0\\
\quad & 0 & ni
\end{pmatrix},\ \ v=\frac{r}{2}\begin{pmatrix}
0_{n-1} & \quad & \quad\\
\quad & 0 & -i\\
\quad & -i & 0
\end{pmatrix} \in T_Z\cp^n,\ r=\vert v\vert>0,
$$
as the group action is transitive on the sphere subbundle of $T\cp^n$.
We compute 
$$
\mu_{D_\rho \cp^n}(x,v)=[x,v]-(R_1-R_2)x=\frac{r}{2}\begin{pmatrix}
0_n & \quad & \quad\\
\quad & 0 & 1\\
\quad & -1 & 0
\end{pmatrix}-\frac{R_1-R_2}{n+1}\begin{pmatrix}
-i_{n-1} & \quad & \quad\\
\quad & -i & 0\\
\quad & 0 & ni
\end{pmatrix}.
$$
Recall that the geodesic through $x$ in direction $j_x v$ is given by
$$
\gamma_{(x,j_xv)}(t)=\mathrm{Ad}_{\exp(-tv)}x=\frac{1}{n+1}\begin{pmatrix}
-i_{n-1} & \quad & \quad\\
\quad & -i(\cos(rt)^2-n\sin(rt)^2) & (n+1)\sin(rt)\cos(rt)\\
\quad & -(n+1)\sin(rt)\cos(rt) & i(n\cos(rt)^2-\sin(rt)^2)
\end{pmatrix}.
$$
It parametrizes an affine circle in $\mathfrak{su}(n+1)$ since
$$
V(t):=\frac{1}{2}\left(\gamma_{(x,j_xv)}(t)-\gamma_{(x,j_x v)}(t+\frac{\pi}{2r})\right)=\frac{1}{2}\begin{pmatrix}
0_{n-1} & \quad & \quad\\
\quad & -i\cos(2rt) & \sin(2rt)\\
\quad & -\sin(2rt) & i\cos(2rt)
\end{pmatrix}
$$
satisfies $\vert V\vert^2=-2\mathrm{tr}(V^2)=1$.
The circle is centered at
$$
y:=\frac{1}{2}\left(\gamma_{(x,j_x v)}(t)+\gamma_{(x,j_x v)}(t+\frac{\pi}{2r})\right)=\frac{1}{n+1}\begin{pmatrix}
-i_{n-1} & 0 & 0\\
0 & \frac{n-1}{2}i & 0\\
0 & 0 & \frac{n-1}{2}i
\end{pmatrix}.
$$
Observe that $\gamma_{(x,j_xv)}(t)=V(t)+y$.
\noindent
Now the second moment map is given by
$$
\mu_2(F(x,v))=R_1\gamma_{(x,j_xv)}(-c_1)-R_2\gamma_{(x,j_xv)}(c_2)=(R_1-R_2)y+R_1V(-c_1)-R_2V(c_2).
$$
Comparing the coefficients of the two matrices yields two equations for $c_1$ and $c_2$:
   \begin{equation}\label{eq:22}
   \begin{aligned}
    R_1\sin(2c_1r)+R_2\sin(2c_2r)&=r, \\
    R_1\cos(2c_1r)-R_2\cos(2c_2r)&=R_1-R_2.
   \end{aligned}
   \end{equation}
The moment map triangle commutes if and only if these two equations hold. There is an elementary geometric interpretation of the equations \eqref{eq:22} which comes from the fact that geodesics in $\cp^n\subset\mathfrak{su}(n+1)$ restrict to round circles in an affine copy of $\R^2\subset\mathfrak{su}(n+1)$ and is shown in Figure \ref{fig3} and Figure \ref{fig9}. From figure \ref{fig6} we deduce that a solution exists if and only if $r:=\vert v\vert\leq 1$.
\begin{figure}[h!]
	\centering
  \includegraphics[width=0.7\textwidth]{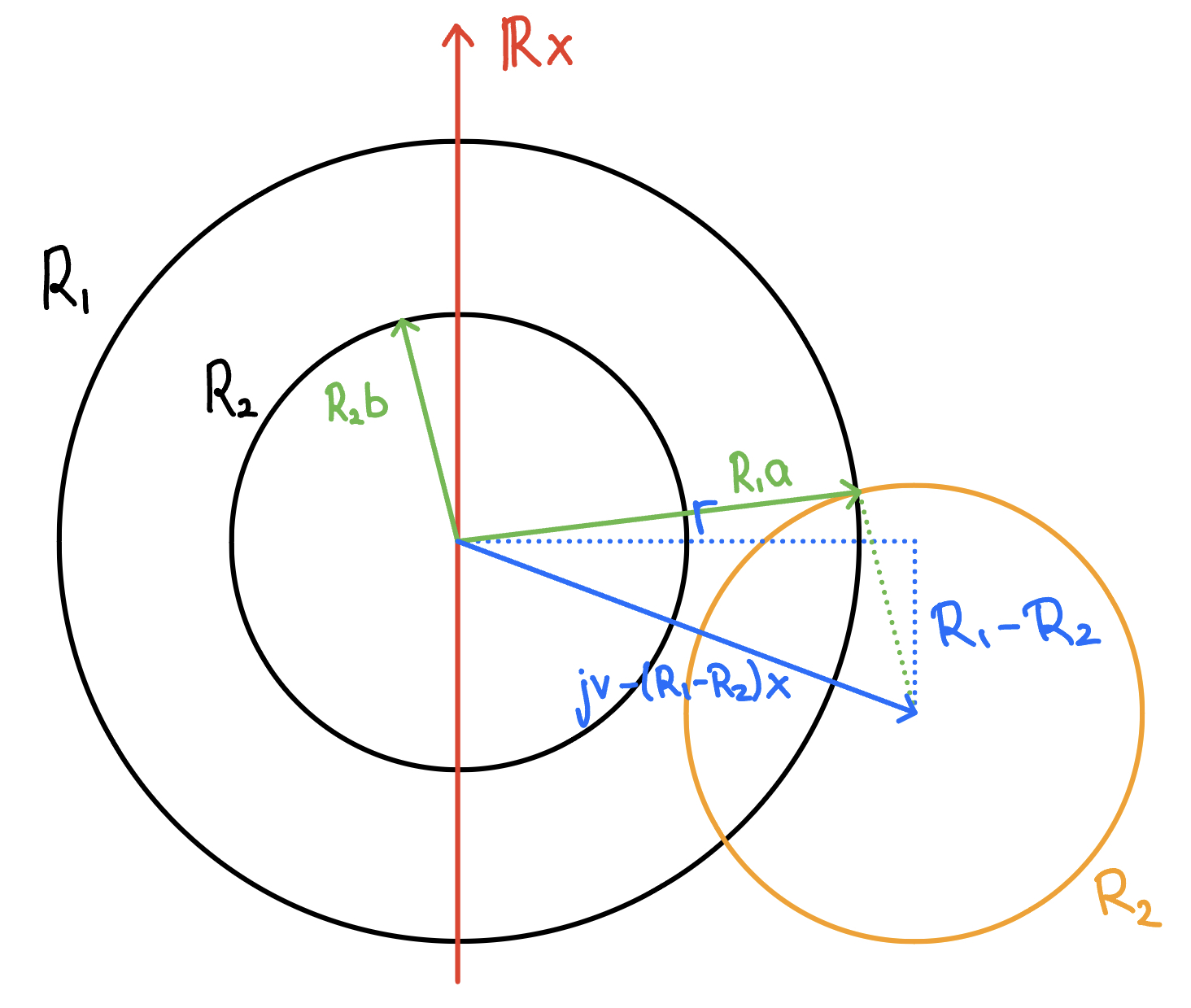}
	\caption{\textit{The geodesic $\gamma_{(x,jv)}$ parametrizes a round circle. We draw this circle twice (in black), one of radius $R_1$ and one of radius $R_2$. The moment map $jv-(R_1-R_2)x$ (in blue) must be equal to $R_1a-R_2b$. Thus we need to decompose $jv-(R_1-R_2)x$ into two vectors (in green), one of length $R_1$ the other of length $R_2$. This is done with an auxiliary circle (in orange) of radius $R_2$ centered at $jv-(R_1-R_2)x$. In principle there are two solutions (intersections of the orange and the outer black circle). Counting counterclockwise from the $x$-axis we pick the first. This is equivalent to demanding $2r(c_1+c_2)\leq \pi$.}}
    \label{fig3}
\end{figure}

\begin{figure}[h!]
	\centering
  \includegraphics[width=0.9\textwidth]{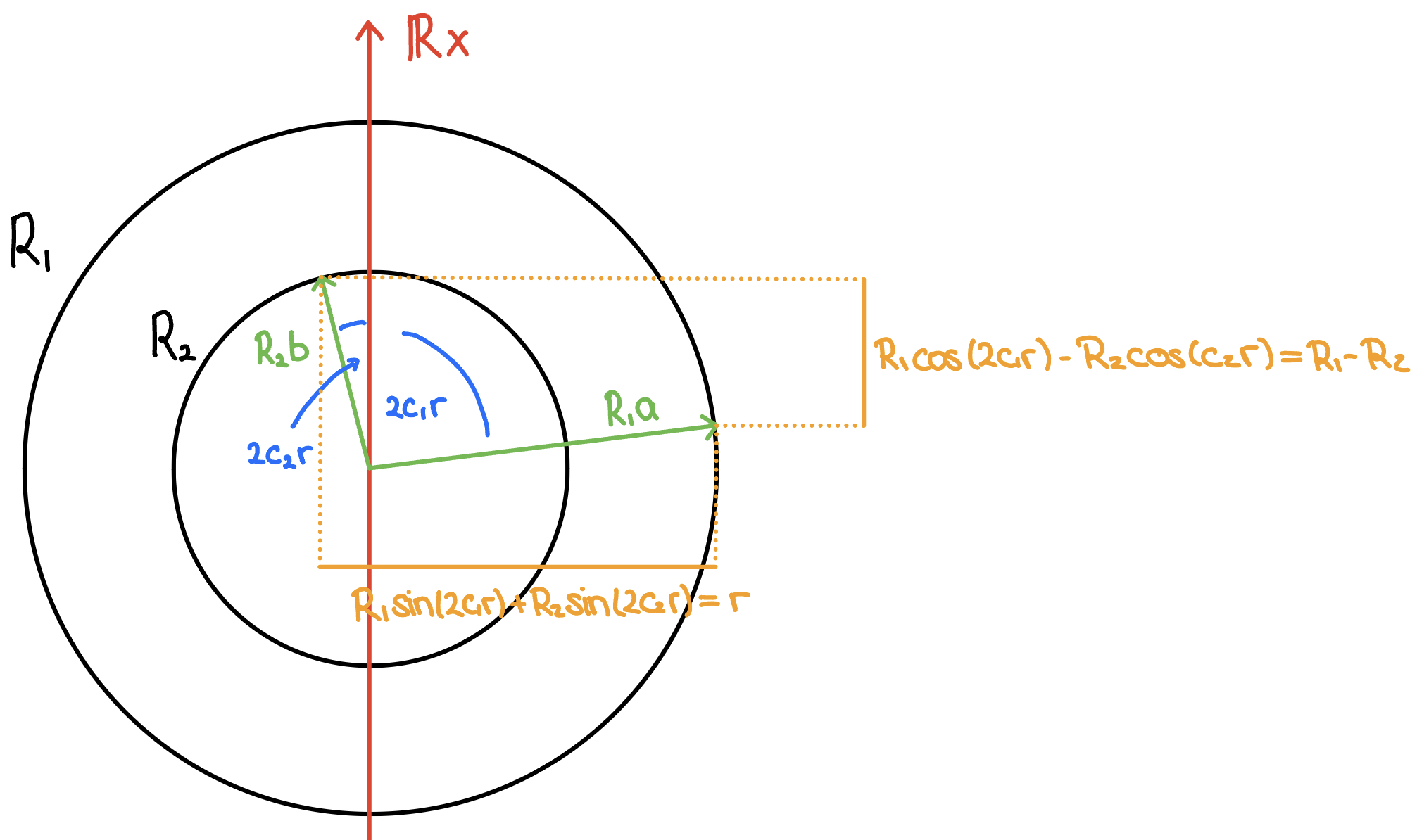}
	\caption{\textit{To see that the configuration drawn is a geometric interpretation of the equations \eqref{eq:22} denote the angle between the $x$-axis and the vectors by $2c_1r$ and $2c_2r$ (in blue). Now projecting the difference $R_1a-R_2b$ to the horizontal respectively the vertical axis yields the equations \eqref{eq:22} (in orange).}}
    \label{fig9}
\end{figure}

\begin{figure}[h!]
	\centering
  \includegraphics[width=0.7\textwidth]{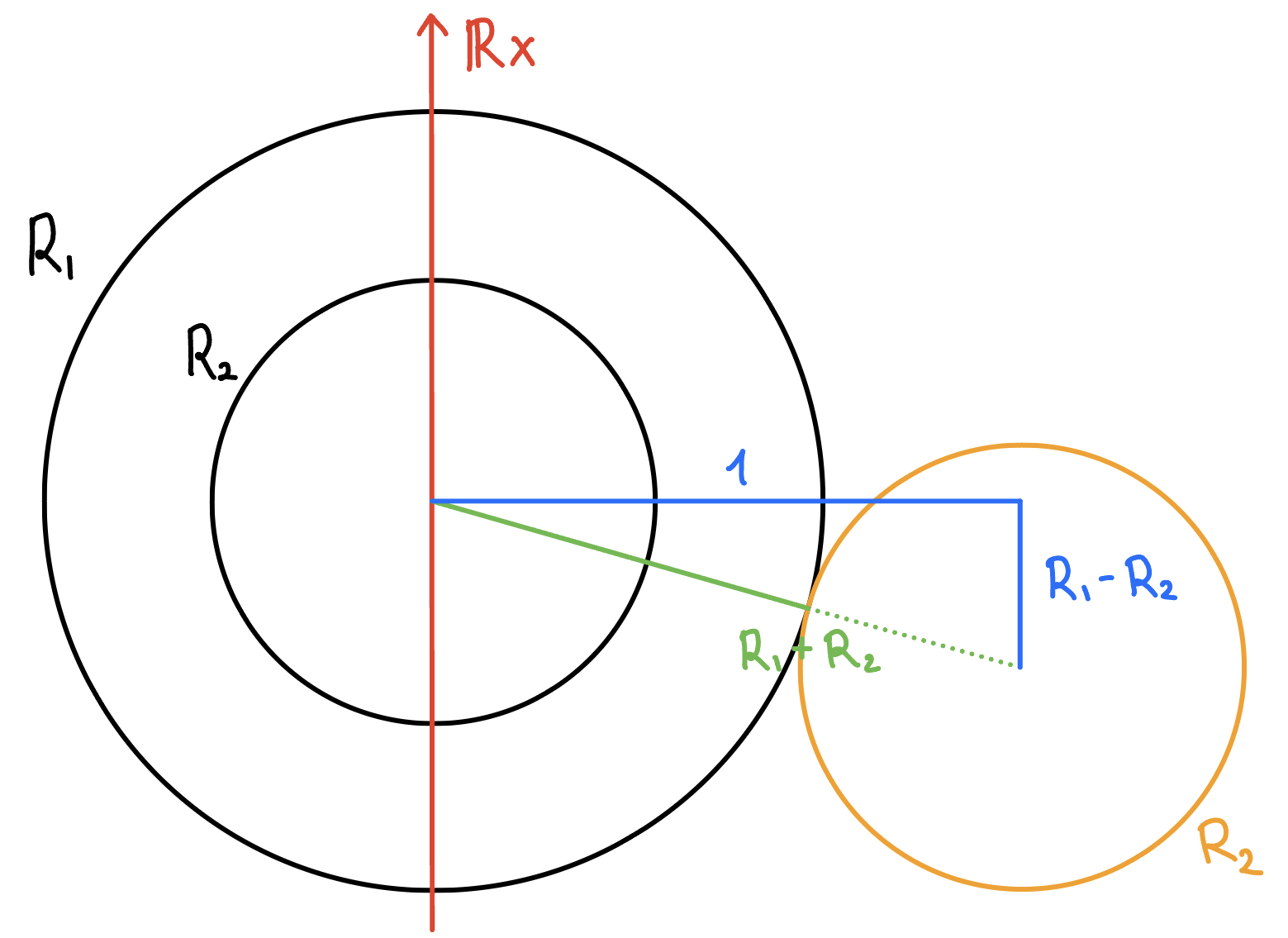}
	\caption{\textit{The figure shows the boundary case where only one intersection exists. Indeed it occurs when $r=1$ as $(R_1+R_2)^2=1+(R_1-R_2)^2$ using that by the definition of $R_1$ and $R_2$, the product $4R_1R_2=1$. It follows that there are two solutions to the equations \eqref{eq:22} if $0<r<1$. Demanding further that $2r(c_1+c_2)<\pi$ we fix one of the solutions.}}
    \label{fig6}
\end{figure}
\begin{Lemma}
The functions 
$$
c_1, c_2:\ (-1,1)\to (0,\infty)
$$
implicitly defined via the equation \eqref{eq:22} are smooth and even.
\end{Lemma}
\begin{proof}
First observe that the defining equations \eqref{eq:22} are invariant under $r\to -r$, thus $c_1$ and $c_2$ must be even. We will prove smoothness applying the implicit function formula to the map
$$
G: (-1,1)\times (0,\infty)^2\to \R^2;\qquad (r, c_1,c_2)\to \begin{pmatrix}
R_1\sin(2c_1r)+R_2\sin(2c_2r)-r, \\
R_1(\cos(2c_1r)-1)-R_2(\cos(2c_2r)-1)
\end{pmatrix}.
$$
Taking the derivative in $c_1, c_2$ yields
$$
\dd_{c_1,c_2}G=\begin{pmatrix}
2rR_1\cos(2c_1r) & 2rR_2\cos(2c_2r)\\
-2rR_1\sin(2c_1r) & 2rR_2\sin(2c_2r)
\end{pmatrix}.
$$
The determinant of $\dd_{c_1,c_2}G$ is equal to
$$
4r^2R_1R_2\left(\cos(2c_1r)\sin(2c_2r)-\sin(2c_1r)\cos(2c_2r)\right)=r^2\sin(2r(c_1+c_2)).
$$
From Figure \ref{fig6} we deduce that $0<2r(c_1+c_2)<\pi$ for all $0<r<1$. Thus the determinant of $\dd_{c_1,c_2}G$ does not vanish when $r\neq 0$. As a consequence of the implicit function theorem $c_1, c_2$ are smoothly depending on $r$ whenever $r\neq 0$. For the case $r\to 0$ we rewrite the equations \eqref{eq:22} in terms of
$$
\tilde G: (-\rho,\rho)\times(0,\infty)^2\to \R^2;\qquad (r, c_1,c_2)\to \begin{pmatrix}
2R_1\tau(2c_1r)c_1+2R_2\tau(2c_2r)c_2-1\\
4R_1\sigma(2c_1r)c_1^2-4R_2\sigma(2c_2r)c_2^2
\end{pmatrix},
$$
where $\tau,\sigma: \R\to\R$ are the smooth functions given by
$$
\tau(x)=\frac{\sin(x)}{x},\qquad\sigma(x)=\frac{1-\cos(x)}{x^2}.
$$
The equations \eqref{eq:22} are now equivalent to $\tilde G(r,c_1,c_2)=0$, taking the derivative in $c_1, c_2$ we obtain
$$
\dd_{c_1,c_2}\tilde G=\begin{pmatrix}
4R_1 r\tau'(2c_1r)c_1+2R_1\tau(2c_1r) & 4R_2r\tau'(2c_2r)c_2+2R_2\tau(2c_2r)\\
8R_1r\sigma'(2c_1r)c_1^2+8R_1\sigma(2c_1r)c_1 & -8R_2r\sigma'(2c_2r)c_2^2-8R_2\sigma(2c_2r)c_2
\end{pmatrix}.
$$
Observe that taking the limit $r\to 0$ in \eqref{eq:22} yields
$$
    R_1c_1(0)+R_2c_2(0)=1/2,\qquad
    R_1c_1(0)^2-R_2c_2(0)^2=0,
$$
thus
$$
2c_1(0)=(R_1+\sqrt{R_1R_2})^{-1}=\left(R_1+\frac{1}{2}\right)^{-1},\qquad
2c_2(0)=(R_2+\sqrt{R_1R_2})^{-1}=\left (R_2+\frac{1}{2}\right)^{-1}.
$$
Further, 
$\tau(0)=1=2\sigma(0),\ \tau'(0)=0=\sigma'(0)$ and therefore
$$
\dd_{c_1,c_2}\tilde G\vert_{r=0}=\begin{pmatrix}
2R_1 & 2R_2\\
4R_1c_1(0) & -4R_2c_2(0)
\end{pmatrix}.
$$
We see that 
$$
\det\left(\dd_{c_1,c_2} \tilde G\vert_{r=0}\right)=-8R_1R_2(c_1(0)+c_2(0))=-4\neq 0
$$
and it follows by the implicit function theorem that $c_1, c_2$ are smooth at $r=0$.
\end{proof}
\noindent
This finishes the construction of the smooth map $F:D_\rho\cp^n\to\cp^n\times\cp^n$. But is it a bijection? We can explicitly give an inverse. A point $(a,b)\in \cp^n\times\cp^n\setminus\bar\Delta$ defines a unique shortest geodesic $\gamma$ connecting from $a$ to $b$. This geodesic is again a segment of a round circle in an affine copy of $\R^2\subset \mathfrak{su}(n+1)$. This situation is depicted in figure \ref{f4}. As shown in figure \ref{f4} there is a unique orthogonal decomposition of the vector $R_1a-R_2b$ such that one of the vectors has length $R_1-R_2$ and $x$ lies inside the angle $2r(c_1+c_2)$. This decomposition determines $jv$ and $(R_1-R_2)x$ and thus $(x,v)\in D_1\cp^n$ and yields an inverse to $F$. So far we showed that $F$ is a smooth, equivariant bijection.
\begin{figure}
	\centering
  \includegraphics[width=0.6\textwidth]{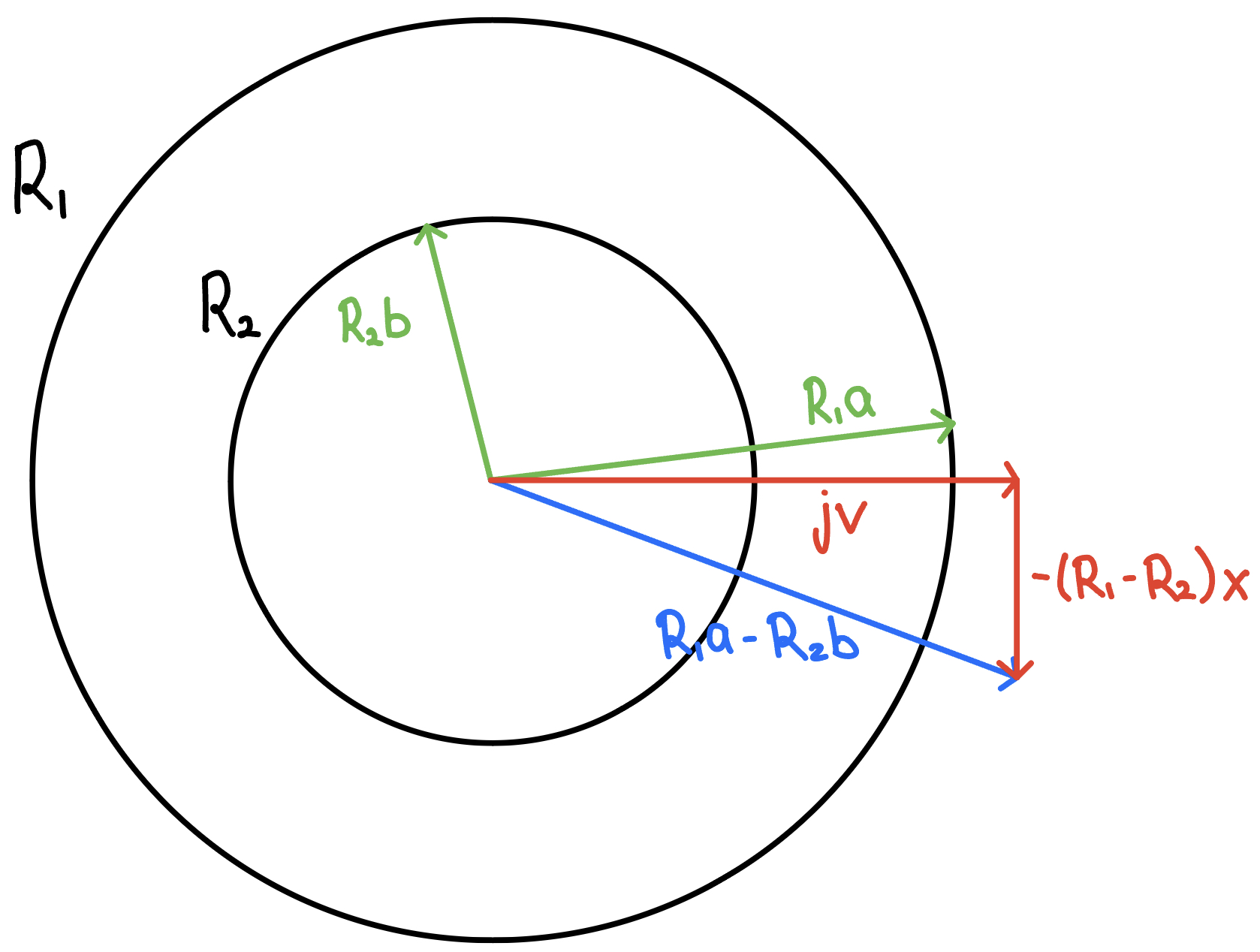}
	\caption{\textit{The vector $R_1a-R_2b$ (in blue) is given and we need to decompose it into two orthogonal vectors $jv$ and $-(R_1-R_2)x$ (in red). The second vector must be of length $R_1-R_2$, thus there are at most two possible candidates. We pick the one, such that $x$ lies inside the angle $2r(c_1+c_2)$.}}
    \label{f4}
\end{figure}
\\
\ \\
Observe that the codimension of the orbits of $SU(n+1)$ is one. Thus any complement of $\mathcal{D}$ is 1-dimensional and thus isotropic for both symplectic forms $\omega_s$ and $F^*(R_1\omega_{FS}\ominus R_2\omega_{FS})$. It follows from Lemma \ref{lem1} that $F$ is a symplectomorphism.
\end{proof}

\subsection{Hofer--Zehnder capacity}
 The symplectomorphism $F$ can also be used to compute the Hofer--Zehnder capacity of $(D_1 \cp^n,\dd\lambda-s\pi^*\omega_{FS})$.
 \begin{Theorem}[Thm.\ \ref{thmb}] Denote $\omega_{FS}$ the Fubini-Study form and $l$ the length of the geodesics, then
    $$
     c_{HZ}(D_1 \cp^n,\dd\lambda-s\pi^*\omega_{FS})=l\left(\sqrt{s^2+ 1}-\vert s\vert \right).
    $$
 \end{Theorem}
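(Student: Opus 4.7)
The strategy parallels Theorem \ref{thma}: Theorem \ref{thm3} turns the upper-bound question into a Hofer--Viterbo problem on the split product $(\cp^n\times\cp^n,R_1\omega_{FS}\ominus R_2\omega_{FS})$, while for the lower bound I build an explicit admissible Hamiltonian out of the magnetic geodesic flow on $\cp^n$. The common value $l(\sqrt{s^2+1}-|s|)$ should arise as $4\pi\min(R_1,R_2)$ on the compactified side and as the oscillation of an optimal radial Hamiltonian on the tangent-bundle side.

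For the upper bound, assume without loss of generality $s\geq 0$, so $R_2\leq R_1$. Via Theorem \ref{thm3} and equipping the second factor with the opposite complex structure, the symplectic form becomes compatible with $J=j\ominus(-j)$, which is regular by \cite[Prop.\ 7.4.3]{DS17}. Let $A\in\pi_2(\cp^n\times\cp^n)$ be the class represented by $J$-holomorphic spheres of the form $\{p\}\times L$ (a complex line in the second factor); then $\omega(A)=4\pi R_2=l(\sqrt{s^2+1}-s)$, and $A$ is $\omega$-indecomposable because the only positive-area generators of $\pi_2$ are $A$ and the first-factor line class of area $4\pi R_1\geq 4\pi R_2$. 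As in Section \ref{sec2}, the $J$-holomorphic representatives of $A$ through a prescribed point $(p,q)$ are precisely the lines through $q$ in the second factor, each of which meets $\bar\Delta$ by intersecting $\{p\}\times\mathrm{Cut}(p)$. Modulo $S^1$-reparametrization the moduli space is $S^{2n-1}$ with its Hopf action, and its oriented $S^1$-cobordism class is nonzero, so Theorem \ref{thm9} yields $c_{HZ}\leq \omega(A)=l(\sqrt{s^2+1}-|s|)$.

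For the lower bound I use a radial Hamiltonian $H=f(|v|)$. Since $J$ preserves every totally geodesic $\cp^1\subset\cp^n$, the Lorentz force $sJ\dot\gamma$ is tangent to such a $\cp^1$ and every solution of $\nabla_{\dot\gamma}\dot\gamma=sJ\dot\gamma$ remains inside one. On a totally geodesic $S^2$ of curvature $1$, balancing covariant acceleration against $sJ\dot\gamma$ on a circle at colatitude $\theta$ gives $\tan\theta=|v|/|s|$ and hence period
\begin{equation*}
\tau(|v|)=\frac{2\pi\sin\theta}{|v|}=\frac{l}{\sqrt{s^2+|v|^2}}.
\end{equation*}
Since $|v|$ is conserved along the flow, $X_E=|v|\,X_{|v|}$, and therefore the period of $X_{f(|v|)}=f'(|v|)\,X_{|v|}$ on the level $|v|=r$ equals $lr/(f'(r)\sqrt{s^2+r^2})$. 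Requiring all non-constant orbits to have period at least $1$ forces $f'(r)\leq lr/\sqrt{s^2+r^2}$, and saturating this bound gives oscillation
\begin{equation*}
\int_0^1 \frac{lr}{\sqrt{s^2+r^2}}\,dr=l\bigl(\sqrt{s^2+1}-|s|\bigr).
\end{equation*}
A standard mollification making $f$ flat near $r=0$ (so $H$ extends smoothly to the zero section) and constant near $r=1$, together with a $(1-\varepsilon)$-rescaling to make periods strictly exceed $1$, then produces a family of admissible Hamiltonians whose oscillations approach $l(\sqrt{s^2+1}-|s|)$.

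The main technical point I expect is the period formula for the magnetic geodesic flow, and verifying that orbits really stay in totally geodesic $\cp^1$'s; once these are in hand, everything else is a direct transposition of the arguments from Theorem \ref{thma}.
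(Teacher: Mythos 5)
Your proposal is correct and follows essentially the same route as the paper: the upper bound comes from applying Theorem \ref{thm9} to the smaller-area line class in $(\cp^n\times\cp^n, R_1\omega_{FS}\ominus R_2\omega_{FS})$ via Theorem \ref{thm3}, and the lower bound from the totally periodic magnetic geodesic flow, whose period $2\pi/\sqrt{s^2+|v|^2}$ on totally geodesic $\cp^1$'s you derive by the same curvature/force-balance computation; your ``saturated'' profile $f$ is exactly the paper's reparametrized moment map $H=l(\sqrt{s^2+|v|^2}-|s|)$. The only cosmetic difference is that you phrase the lower bound as an optimization over radial Hamiltonians rather than writing the optimal one directly, and you spell out the intersection with $\bar\Delta$ and the indecomposability of $A$ slightly more explicitly than the paper does.
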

\begin{proof}
The upper bound works very similar to the untwisted case. We again pick $A\in [S^2,\cp^n\times\cp^n]$ the homotopy class of one of the two generators. As discussed for the untwisted case, the moduli space of $j\ominus j$-holomorphic spheres in class $A$ through a fixed point and intersecting the anti-diagonal is a smooth, compact $S^1$-manifold not cobordant to the empty set. It follows by Hofer--Viterbo's Theorem that $\omega(A)$ bounds the Hofer--Zehnder capacity from above. Observe that in this case it matters which factor we choose, as both yield different upper bounds. We obtain
\begin{equation}\label{e7}
    c_{HZ}(D_1\cp^n,\dd\lambda-s\pi^*\omega_{FS})\leq 4\pi \min \lbrace R_1, R_2\rbrace= 2\pi \left( \sqrt{s^2+1}-\vert s\vert\right).
\end{equation}
For the lower bound we need to work a little harder, as we can not use the same Hamiltonian as in the untwisted case. Indeed the kinetic Hamiltonian $E(x,v):=\frac{1}{2}g_x(v,v)$ does not generate geodesic flow anymore but rather its magnetic version.
\begin{Lemma}
    The magnetic geodesic flow (i.e. the flow generated by $X_E$) is totally periodic. 
\end{Lemma}
\begin{proof}
    As shown in \cite[Lemma 6.1]{BR19} the vector field $X_E$ is pointwise given by
    $$
    (X_E)_{(x,v)}=X_{(x,v)}+s(jv)^\VV,
    $$
    where $X$ denotes the vector field generating geodesic flow and $(\cdot)^\VV$ the vertical lift as introduced in the proof of Lemma \ref{lem2}. Now recall that for any point $(x,v)\in T\cp^n$ outside the zero section there is a unique totally geodesic embedded copy of $\cp^1$ through $x$ tangent to $v$. In particular the vector fields are $X+s(jv)^\VV$ are tangent to these embedded copies of $T\cp^1$ and their flow will restrict to $T\cp^1$. It is therefore enough to study the one dimensional case. So let $\gamma(t)=(x(t),v(t))$ be a magnetic geodesic, i.e. $\dot\gamma=X+s(jv)^\VV$. Projecting to the vertical distribution we obtain
    $\dot x (t)= v(t)$. Now projecting to the horizontal distribution we 
    obtain 
    $$
    \nabla_{\dot x}v=sjv.
    $$
    It follows that $\gamma(t)=(x(t),\dot x(t))$, where $x(t)$ is a curve of constant geodesic curvature $\kappa_g=\vert \frac{s}{v}\vert$, i.e. it parametrizes a geodesic circle and is therefore closed. 
\end{proof}
\noindent
 If $R$ denotes the radius (with respect to the Riemannian metric $g$) of a geodesic circle we know using normal polar coordinates that its circumference $C$ and the geodesic curvature $\kappa_g$ are
\begin{align}\label{e5}
   C&=\frac{2\pi}{\sqrt{\kappa}}\sin(\sqrt{\kappa}R)=\frac{2\pi\sqrt{\kappa}^{-1}\tan(\sqrt{\kappa}R)}{\sqrt{1+(\tan(\sqrt{\kappa}R))^2}},\\ 
   \kappa_g&=\frac{\sqrt{\kappa}}{\tan(\sqrt{\kappa}R)}.\label{e4}
\end{align} 
Inserting $\kappa_g$ into \eqref{e5} yields
$$
C=\frac{2\pi}{\kappa_g\sqrt{1+\kappa/\kappa_g^2}}=\frac{2\pi\vert v\vert}{\sqrt{s^2+\kappa\vert v\vert^2}}.
$$
Now, we conclude that the period is given by
$$
T=\frac{C}{\vert v\vert}=\frac{2\pi}{\sqrt{s^2+\kappa\vert v\vert^2}}.
$$
In particular the reparametrization
$$
H: D_1\cp^n\to \R;\ \ (x,v)\mapsto 2\pi\left( \sqrt{s^2+\vert v\vert^2}-\vert s\vert\right)
$$
is the moment map of a circle action on $(D_1 \cp^n,\dd\lambda-s\pi^*\omega_{FS})$. 
Arguing as in the untwisted case at the beginning of section \ref{sec3} we can modify $H$
changing its oscillation only by an arbitrary small $\varepsilon>0$ to make $H$ admissible. In particular we obtain the following lower bound
$$
c_{HZ}(D_1\cp^n)\geq \max H-\min H=2\pi \left( \sqrt{s^2+1}-\vert s\vert\right).
$$
Comparing this to equation \eqref{e7} we see that upper and lower bound agree, further we choose the normalization of the metric such that $l=2\pi$, therefore we just finished the proof of Theorem \ref{thmb}.
\end{proof}
\begin{Remark}
    We can interpret the symplectomorphism of Theorem \ref{thm3} as a Lerman cut with respect to $H$, i.e.
    $$
\overline{\left( D_\rho \cp^n, \omega_s\right)}\cong \left(\cp^n\times\cp^n,R_1\omega_{FS}\ominus R_2\omega_{FS}\right).
    $$
\end{Remark}
\begin{proof}
    As $H$ generates a circle action also $H\circ F^{-1}$ must generate a Hamiltonian circle action on $(\cp^n\times\cp^n\setminus \bar \Delta, R_1\omega_{FS}\ominus R_2\omega_{FS})$. Denote $(x,v):= F^{-1}(N,S)$. By construction $\mu_1(x,v)=\mu_2(N,S)$ and therefore
\begin{align*}
   \vert \mu_1(x,v)\vert ^2 = \vert \mu_2(N,S)\vert^2 \ &\Leftrightarrow\ \vert v\vert^2+ (R_2-R_1)^2
   =R_1^2+R_2^2-2R_1 R_2(N,S) \\
   &\Leftrightarrow\ \vert v\vert^2=2R_1 R_2(1-(N,S)). 
\end{align*}
Thus
$$
H(F^{-1}(N,S))=2\pi \left (\vert R_1 N-R_2 S\vert -\vert R_2-R_1\vert \right),
$$
in particular $H\circ F^{-1}$ extends smoothly to $\cp^n\times \cp^n$ and is critical on the anti-diagonal divisor $\bar\Delta$.
\end{proof}
\newpage
\bibliographystyle{abbrv}
\bibliography{ref}

\end{document}